\newcommand{\Mdef}[2]{\newcommand{#1}{\relax \ifmmode #2 \else $#2$\fi}}
\newcommand{\tensor}{\otimes}
\newcommand{\sdr}{\rtimes}
\Mdef{\bhom}{\mathbf{\hat{H}om}}
\Mdef{\Mod}{\mathrm{mod}}
\newcommand{\st}{\; | \;}
\newtheorem{thm}{Theorem}[section]
\newtheorem{lemma}[thm]{Lemma}
\newtheorem{prop}[thm]{Proposition}
\newtheorem{cor}[thm]{Corollary}
\theoremstyle{definition}
\newtheorem{example}[thm]{Example}
\newtheorem{remark}[thm]{Remark}
\newcommand{\qqed}{\qed \\[1ex]}
\renewenvironment{proof}[1][\hspace*{-.8ex}]{\noindent {\bf Proof #1:\;}}{\qqed}
\Mdef{\PH} {\Phi^H}
\Mdef{\PK} {\Phi^K}
\Mdef{\PL} {\Phi^L}
\Mdef{\PT} {\Phi^{\T}}
\Mdef{\ef}{E{\cF}_+}
\Mdef{\etf}{\widetilde{E}{\cF}}
\Mdef{\eg}{E{G}_+}
\Mdef{\etg}{\tilde{E}{G}}
\Mdef{\infl}{\mathrm{inf}}
\Mdef{\defl}{\mathrm{def}}
\Mdef{\res}{\mathrm{res}}
\Mdef{\ind}{\mathrm{ind}}
\Mdef{\coind}{\mathrm{coind}}
\Mdef{\univ}{\mathcal{U}}
\newcommand{\Gr}{\mathrm{Gr}}
\Mdef{\Fp}{\mathbb{F}_p}
\Mdef{\Zpinfty}{\Z /p^{\infty}}
\Mdef{\Zpadic}{\Z_p^{\wedge}}
\newcommand{\lra}{\longrightarrow}
\newcommand{\lr}[1]{\langle #1 \rangle}
\newcommand{\Gspectra}{\mbox{$G$-{\bf spectra}}}
\newcommand{\spec}{\mathrm{Spec}}
\Mdef{\we}{\mathbf{we}}
\Mdef{\fib}{\mathbf{fib}}
\Mdef{\cof}{\mathbf{cof}}
\Mdef{\BI}{\mathcal{BI}}
\Mdef{\B}{\mathbb{B}}
\Mdef{\C}{\mathbb{C}}
\Mdef{\D}{\mathbb{D}}
\Mdef{\E}{\mathbb{E}}
\Mdef{\T}{\mathbb{T}}
\Mdef{\F}{\mathbb{F}}
\Mdef{\G}{\mathbb{G}}
\Mdef{\I}{\mathbb{I}}
\Mdef{\N}{\mathbb{N}}
\Mdef{\Q}{\mathbb{Q}}
\Mdef{\R}{\mathbb{R}}
\Mdef{\bbS}{\mathbb{S}}
\Mdef{\Z}{\mathbb{Z}}
\Mdef{\bA}{\mathbb{A}}
\Mdef{\bB}{\mathbb{B}}
\Mdef{\bC}{\mathbb{C}}
\Mdef{\bD}{\mathbb{D}}
\Mdef{\bE}{\mathbb{E}}
\Mdef{\bF}{\mathbb{F}}
\Mdef{\bG}{\mathbb{G}}
\Mdef{\bH}{\mathbb{H}}
\Mdef{\bI}{\mathbb{I}}
\Mdef{\bJ}{\mathbb{J}}
\Mdef{\bK}{\mathbb{K}}
\Mdef{\bL}{\mathbb{L}}
\Mdef{\bM}{\mathbb{M}}
\Mdef{\bN}{\mathbb{N}}
\Mdef{\bO}{\mathbb{O}}
\Mdef{\bP}{\mathbb{P}}
\Mdef{\bQ}{\mathbb{Q}}
\Mdef{\bR}{\mathbb{R}}
\Mdef{\bS}{\mathbb{S}}
\Mdef{\bT}{\mathbb{T}}
\Mdef{\bU}{\mathbb{U}}
\Mdef{\bV}{\mathbb{V}}
\Mdef{\bW}{\mathbb{W}}
\Mdef{\bX}{\mathbb{X}}
\Mdef{\bY}{\mathbb{Y}}
\Mdef{\bZ}{\mathbb{Z}}
\Mdef{\cA}{\mathcal{A}}
\Mdef{\cB}{\mathcal{B}}
\Mdef{\cC}{\mathcal{C}}
\Mdef{\mcD}{\mathcal{D}} 
\Mdef{\cE}{\mathcal{E}}
\Mdef{\cF}{\mathcal{F}}
\Mdef{\cG}{\mathcal{G}}
\Mdef{\mcH}{\mathcal{H}} 
\Mdef{\cI}{\mathcal{I}}
\Mdef{\cJ}{\mathcal{J}}
\Mdef{\cK}{\mathcal{K}}
\Mdef{\mcL}{\mathcal{L}}
\Mdef{\cM}{\mathcal{M}}
\Mdef{\cN}{\mathcal{N}}
\Mdef{\cO}{\mathcal{O}}
\Mdef{\cP}{\mathcal{P}}
\Mdef{\cQ}{\mathcal{Q}}
\Mdef{\mcR}{\mathcal{R}}
\Mdef{\cS}{\mathcal{S}}
\Mdef{\cT}{\mathcal{T}}
\Mdef{\cU}{\mathcal{U}}
\Mdef{\cV}{\mathcal{V}}
\Mdef{\cW}{\mathcal{W}}
\Mdef{\cX}{\mathcal{X}}
\Mdef{\cY}{\mathcal{Y}}
\Mdef{\cZ}{\mathcal{Z}}
\Mdef{\ca}{\mathcal{a}}
\Mdef{\ct}{\mathcal{t}}
\Mdef{\At}{\tilde{A}}
\Mdef{\Bt}{\tilde{B}}
\Mdef{\Ct}{\tilde{C}}
\Mdef{\Et}{\tilde{E}}
\Mdef{\Ht}{\tilde{H}}
\Mdef{\Kt}{\tilde{K}}
\Mdef{\Lt}{\tilde{L}}
\Mdef{\Mt}{\tilde{M}}
\Mdef{\Nt}{\tilde{N}}
\Mdef{\Pt}{\tilde{P}}
\Mdef{\tA}{\tilde{A}}
\Mdef{\tB}{\tilde{B}}
\Mdef{\tC}{\tilde{C}}
\Mdef{\tE}{\tilde{E}}
\Mdef{\tH}{\tilde{H}}
\Mdef{\tK}{\tilde{K}}
\Mdef{\tL}{\tilde{L}}
\Mdef{\tM}{\tilde{M}}
\Mdef{\tN}{\tilde{N}}
\Mdef{\tP}{\tilde{P}}
\Mdef{\ft}{\tilde{f}}
\Mdef{\xt}{\tilde{x}}
\Mdef{\yt}{\tilde{y}}
\Mdef{\Ab}{\overline{A}}
\Mdef{\Bb}{\overline{B}}
\Mdef{\Cb}{\overline{C}}
\Mdef{\Db}{\overline{D}}
\Mdef{\Eb}{\overline{E}}
\Mdef{\Fb}{\overline{F}}
\Mdef{\Gb}{\overline{G}}
\Mdef{\Hb}{\overline{H}}
\Mdef{\Ib}{\overline{I}}
\Mdef{\Jb}{\overline{J}}
\Mdef{\Kb}{\overline{K}}
\Mdef{\Lb}{\overline{L}}
\Mdef{\Mb}{\overline{M}}
\Mdef{\Nb}{\overline{N}}
\Mdef{\Ob}{\overline{O}}
\Mdef{\Pb}{\overline{P}}
\Mdef{\Qb}{\overline{Q}}
\Mdef{\Rb}{\overline{R}}
\Mdef{\Sb}{\overline{S}}
\Mdef{\Tb}{\overline{T}}
\Mdef{\Ub}{\overline{U}}
\Mdef{\Vb}{\overline{V}}
\Mdef{\Wb}{\overline{W}}
\Mdef{\Xb}{\overline{X}}
\Mdef{\Yb}{\overline{Y}}
\Mdef{\Zb}{\overline{Z}}
\Mdef{\db}{\overline{d}}
\Mdef{\hb}{\overline{h}}
\Mdef{\qb}{\overline{q}}
\Mdef{\rb}{\overline{r}}
\Mdef{\tb}{\overline{t}}
\Mdef{\ub}{\overline{u}}
\Mdef{\vb}{\overline{v}}
\Mdef{\hc}{\hat{c}}
\Mdef{\he}{\hat{e}}
\Mdef{\hf}{\hat{f}}
\Mdef{\hA}{\hat{A}}
\Mdef{\hH}{\hat{H}}
\Mdef{\hJ}{\hat{J}}
\Mdef{\hM}{\hat{M}}
\Mdef{\hP}{\hat{P}}
\Mdef{\hQ}{\hat{Q}}
\Mdef{\thetab}{\overline{\theta}}
\Mdef{\phib}{\overline{\phi}}
\Mdef{\uA}{\underline{A}}
\Mdef{\uB}{\underline{B}}
\Mdef{\uC}{\underline{C}}
\Mdef{\uD}{\underline{D}}
\Mdef{\bolda}{\mathbf{a}}
\Mdef{\boldb}{\mathbf{b}}
\Mdef{\bfD}{\mathbf{D}}
\Mdef{\fm}{\frak{m}}
\Mdef{\fp}{\frak{p}}
\newcommand{\fX}{\mathfrak{X}}
\Mdef{\eps}{\epsilon}
\newcommand{\sub}{\mathrm{Sub}}
\newcommand{\Qt}{\tilde{\Q}}
\newcommand{\End}{\mathrm{End}}
\renewcommand{\tb}{\overline{\times}}
\newcommand{\diag}{\mathrm{diag}}
\newcommand{\e}{(e)}
\newcommand{\Tt}{\tilde{T}}
\newcommand{\piG}{\pi^G}
\newcommand{\PP}{\mathbb{P}}
\newcommand{\fP}{\mathfrak{P}}
\newcommand{\Rtop}{R^{top}}
\newcommand{\Rttop}{\tilde{R}^{top}}
\newcommand{\Ac}{\check{A}}
\newcommand{\Bc}{\check{B}}
\newcommand{\Abar}{\overline{A}}
\newcommand{\Bbar}{\overline{B}}
\newcommand{\Khat}{\hat{K}}
\newcommand{\Gammat}{\tilde{\Gamma}}
\newcommand{\Mono}{\mathrm{Mono}}
\newcommand{\Aut}{\mathrm{Aut}}
\newcommand{\SSi}{\Sigma}
\begin{document}
\title{The spectral space of conjugacy classes of subgroups of a compact Lie group}

\author{J.P.C.Greenlees}
\address{Mathematics Institute, Zeeman Building, Coventry CV4, 7AL, UK}
\email{john.greenlees@warwick.ac.uk}

\date{}

\begin{abstract}
  The space of conjugacy classes of subgroups of a compact Lie groups
  with its Zariski topology splits as a disjoint union of clopen
  blocks, each dominated by a subgroup $H$ with finite Weyl group, and
  the structure of each block is controlled by an integral
  representation of $\pi_0(H)$.
\end{abstract}

\thanks{  The
  work is partially supported by EPSRC Grant
  EP/W036320/1. The author  would also  like to thank the Isaac Newton
  Institute for Mathematical Sciences, Cambridge, for support and
  hospitality during the programme Equivariant homotopy theory in
  context, where later parts of  work on this paper was undertaken.
  This work was supported by EPSRC grant EP/Z000580/1.  }

\maketitle

\tableofcontents

\section{Introduction}
\subsection{Overview}
The space $\fX_G =\sub(G)/G$ of conjugacy classes of closed subgroups
of $G$ has two topologies of interest.
First of all, it has the h-topology, which is the quotient topology of the
Hausdorff metric topology on $\sub (G)$. This is a Stone space (compact, Hausdorff,
totally disconnected). The space also has a Zariski topology whose
closed sets are the h-closed sets which are also closed under passage
to cotoral subgroups\footnote{$L$ is {\em cotoral} in $K$
if $L$ is normal in $K$ with quotient being a torus. For conjugacy
classes $(L)$ is cotoral in $(K)$ if the relation holds for some
representative subgroups.}. Our 
focus is on the Zariski topology. 

In fact $\fX_G$ (with the Zariski topology) is the Balmer spectrum of
finite rational $G$-spectra \cite{spcgq,prismatic} making it a
spectral space. This also makes it  the basis of an algebraic model for rational 
$G$-spectra. We will briefly describe the intended applications 
to $G$-equivariant cohomology theories in Subsection \ref{subsec:Gspectra}, 
but this paper is purely about the space $\fX_G$ itself.

It was shown in \cite{prismatic} that the space $\fX_G$ has finite
Cantor-Bendixson rank (equal to the rank of $G$) and that the
Cantor-Bendixson and Thomason filtrations agree. The results of the
present paper refine this numerical information to give a rather
detailed description of the space.

We will show (Theorem \ref{thm:partition}) that  the space $\fX_G$ decomposes as a finite union
of Zariski clopen subspaces (which we refer to as `blocks')
$$\fX_G=\sub(G)/G=\cV^G_{H_1}\amalg \cdots \amalg \cV^G_{H_n},$$
where $H_1, H_2, \cdots , H_n$ are subgroups with finite Weyl
group. The block $\cV^G_H$ is dominated by a subgroup $H$ in the sense
that all conjugacy classes are represented by subgroups of $H$, and
the structure of $\cV^G_H$ is controlled by the subgroup $H$.

\subsection{Single blocks}
Here we give an outline of the structure of a single block
$\cV^G_H$ in the sense that we describe the block `up to finite error' to
give a crude guide to the appearance. The body of the paper will make this more precise.
\subsubsection{Shape}
We now describe the structure of an individual block $\cV^G_H$: up to
finite error we have
$$\cV^G_H \sim \fP^G_H\times \cN^G_H$$
where $\fP^G_H$ is the poset of subgroups of a torus with the cotoral
ordering and $\cN^G_H$ is a Stone space which, again up to finite
error, has a product decomposition 
$$\cN^G_H \sim \cN^G_{H,2}\times \cdots \times \cN^G_{H,t}$$
as a product of standard Stone spaces (`integral Grassmannians'). We
outline the shape of these below. Altogether 
$$\cV^G_H \sim \fP^G_H\times \cN^G_{H,2}\times \cdots \times
\cN^G_{H,t}, ;$$
its Cantor-Bendixson rank is finite and equal to the sum of those of
the factors. 

\subsubsection{Fusion}
First of all, the block $\cV^G_H$ for $G$ is obtained from the block $\cV^H_H$ by passing
to $G$-conjugacy (the map $\cV^H_H\lra \cV^G_H$ is a quotient map with
finite fibres of bounded size). We therefore get the general form by looking at all top
blocks $\cV^G_G$.

We first describe the two simplest types of block and then explain how
to mix them. 

\subsubsection{Weyl-finite blocks}
The simplest case is when the block $\cV^G_G$ is
Weyl-finite in the sense that all elements of $\cV^G_G$ have finite Weyl group.
The reason for the simplicity is that in this case $\cV^G_G$ is itself
a Stone space (with trivial cotoral ordering). An example to bear in mind is $G=H=O(2)$, then
$$\cV^G_G=\{ (D_{2n})\st n\geq 1\} \cup \{ O(2)\}, $$
topologised as the one point compactification of the positive
integers. 

\subsubsection{Noetherian blocks}
The next case is when the subgroup $G$ is a finite central extension
of a torus. In this case the topology on $\cV^G_G$ is determined by
the the partially ordered set  of subgroups of the identity
component $G_e$. The first example  is when $G=H$ is itself a circle, and
$$\cV^G_G=\{ (C_n)\st n\geq 1\}\cup \{ SO(2)\}, $$
topologised like $\spec (\Z)$: the proper closed
subspaces are precisely the finite sets of finite subgroups $C_n$.

\subsubsection{Mixed blocks}
For a general block, we mix the behaviour of Weyl-finite and
Noetherian blocks. If $G_e=\Sigma\times_Z T$ where $\Sigma$ is
semisimple, $T$ is a torus
and $Z$ is a finite central subgroup then $T$ is the identity
component of the centre of the identity component, so we call it the
{\em central torus} of $G_e$ and write $T=T_zG$. We can
identify the underlying spectral space $\cV^G_G$ in terms of
the integral representation of the component group $G_d$ on
$\Lambda_0=H_1(T)$, together 
with the extension class $\eps (G)\in H^2(G_d; T)\cong H^3(G_d; \Lambda_0)$.

Up to finite multiplicity, $\cV^G_G$ is a product of a Stone space $\cN^G_G$
space and a poset $\fP^G_G$. The Stone space $\cN^G_G$ is a
neighbourhood of $G$ in the space $\Phi G$ of conjugacy classes of
subgroups with finite Weyl groups. The poset $\fP^G_G$ is obtained
from $\sub (T_z^1G)$ by passing to $G$-conjugacy classes, where $T_z^1G$
is  the central torus of $G$ itself (i.e., the subtorus of $T_zG$ on
which $G_d$ acts trivially).
In fact there is a continuous map 
$$\lambda : \cV^G_G\lra \fP^G_G\times \cN^G_G$$
which is an isomorphism up to finite indeterminacy. We take $\lambda
((K))=( K\cap T_z^1G, \omega(K))$, where $\omega (K)/K$ is 
the conjugacy class of the maximal torus of $W_G(K)$.
The map $\lambda$ is almost surjective in a sense we will explain, and
the fibres over each pair are finite and of bounded cardinality. 

\subsection{The isotypical decomposition of the Stone space}
We may decompose further. Suppose
$$\Lambda_0\tensor \Q =H_1(T; \Q)=S_1^{\oplus m_1}\oplus S_2^{\oplus m_2}\oplus \cdots
\oplus S_t^{\oplus m_t}$$
where $S_1, S_2, \ldots , S_t$ are distinct simple modules with $S_1$
having trivial action. The factor $S_1^{\oplus m_1}$ gives rise to the Noetherian factor
$\fP^G_G$ above.  The factor $\cN^G_{G,S_i}$ corresponds to
$S_i^{\oplus m_i}$, and is a Stone space of Cantor-Bendixson rank
$m_i$, which is, up to finite error, an $m_i$-stage compactification of the
spaces of full rank $W$-sublattices of $\Z^{\oplus m_i}$.

\subsection{Application to $G$-spectra}
\label{subsec:Gspectra}
This subsection explains the motivation for this paper. It
will play no mathematical role, so can be ignored by incurious readers.

If we decompose the space $\sub(G)/G$ as a union of Zariski clopen
subsets
$$\sub(G)/G=\fX_1\amalg \cdots \amalg \fX_n,$$
then since the rational Burnside ring is $[S^0,S^0]^G=A(G)=C(\fX_G,
\Q)$,  idempotents in the Burnside ring show that the category of
rational $G$-spectra decomposes into blocks
$$\Gspectra \simeq \Gspectra \lr{\fX_1}\times  \cdots \times \Gspectra
\lr{\fX_n}$$
where  $\Gspectra \lr{\fX}$ is the category of $G$-spectra with
geometric isotropy in $\fX$.  Thus we may conduct the analysis one
block at a time. 

It is conjectured 
\cite{AGconj} that 
there is a small and calculable abelian model $\cA (G|\fX)$ so that the category DG-$\cA 
(G|\fX)$ of differential graded objects in $\cA (G|\fX)$ is a model 
for $\Gspectra\lr{\fX}$ in the sense that there is a Quillen equivalence 
$$\Gspectra\lr{\fX}\simeq DG-\cA (G|\fX).$$
In general terms $\cA (G|\fX)$ is a category of equivariant sheaves
over $\fX$, so our description of $\cV^G_H$ gives the general form of
$\cA (G|\cV^G_H)$.

\subsubsection{The algebraic model}
In general terms we need two additional pieces of data on the space $\cV^G_H$ to
form an algebraic model $\cA (G|\cV^G_H)$ for rational $G$-spectra
with geometric isotropy in $\cV^G_H$.
\begin{itemize}
\item A sheaf of rings on
$\cV^G_H$ (the stalks over $K$ is the polynomial ring
$H^*(BW_G^e(K))$); the sheaf of rings has additional
structure enabling us to encode the Localization Theorem.
\item A component structure (this associates a finite
group $\cW_K=\pi_0(W_G(K))$ to each point $K\in \fX_G$). 
\end{itemize}

Without specifying all the structure we can outline the strategy for
giving an algebraic model. 
As  described above, it is natural to work block by block.

\subsubsection{Weyl-finite blocks}
We describe the strategy  in the case of Weyl-finite blocks from
\cite{gqwf} as a template. Suppose then that $\fX$ is a Weyl-finite block. 
First, we identify the model in an artificially simple case,
as if all the Weyl groups were connected. The model is then simply
the category of sheaves over $\fX$. We then add in a {\em component
  structure}, giving an action a finite Weyl group acts at each
point; this means that the associated group rings together to
make a sheaf $\Q[\cW]$ over $\fX$. The model  (``equivariant sheaves'') consists of 
sheaves of modules over the sheaf $\Q[\cW]$.

Given this algebraic template, one makes corresponding constructions in the category of
spectra. In both algebra and topology the
category of interest is a weighted limit of module categories over simpler rings.
At each stalk this amounts to considering modules over $\Q$ in
free-$W$-spectra for a finite group $W$, and we need to remember that
the category is generated by the single module $\Q [W]$.

Finally,  it remains to prove that the algebraic and topological
categories are equivalent. By Morita theory, the algebraic and topological categories
are each equivalent to the endomorphism rings of their
generators. In the Weyl-finite case, the endomorphism rings of
generators have homotopy in a single degree, so they are formal, and 
 by Shipley's Theorem \cite{ShipleyHZ}, they are
equivalent to categories of modules over a ring. It just
remains to calculate the homotopy of the endomorphism rings. The
rings arising in topology and algebra are isomorphic and formal,
so the categories of spectra and of algebraic objects are equivalent.  

\subsubsection{General blocks}
For a general block we are guided by the map 
$$\lambda: \cV^G_H \lra  \sub(T^1_z)_G\times \cN^G_H.$$
We use the Weyl-finite model in the $\cN^G_H$ direction and the
Noetherian model in the $\sub(T_1)_G$ direction. More precisely an
object of the algebraic model is an equivariant sheaf over $\cN^G_H$
with the fibre over a subgroup
$K$ being very much like an object of the algebraic category $\cA (T^1_z)$, but with
the poset $\sub(T^1_z)$ replaced by the poset $\omega^{-1}(K)$ (varying
with $K$).

\subsection{Associated work in preparation}
As described above, this paper is part of a programme to give an algebraic model for
rational $G$-spectra for all compact Lie groups $G$. This paper
describes the group theoretic data that feeds into the construction of an
abelian category $\cA (G)$ for all compact Lie groups $G$. It builds
on \cite{t2wqalg} which does this specifically for groups with
identity component a torus.

The path from an understanding of the space $\fX_G$ of subgroups to an
algebraic model for rational $G$-spectra involves several steps. These
have been implemented in the special case that $G=SU(3)$ in the series \cite{gq1,
  t2wqmixed, u2q, su3q}. This gives a good picture of the general
pattern, except that the use of flags of subgroups was unnecessary;
cases where flags are necessary can be seen in \cite{AGtoral,gtoralq}. 

The easiest case of the general programme is that of Weyl-finite
blocks, completed in \cite{gqwf}.  The case of Noetherian blocks is
closely modelled on those of the torus, and an account is in
preparation in \cite{AGnoeth}. An explicit algebraic model for a
general block will be given  in \cite{AVmodel}, and it is hoped that
this will be the basis of the proof that the
category of rational $G$-spectra has an algebraic model in general.

\subsection{Organization}
The rest of the paper is layed out as follows. In Section
\ref{sec:partition} we show that the space $\fX_G$ of subgroups is a disjoint
union of blocks $\cV^G_H$ each dominated by a subgroup $H$ with finite
Weyl group.  In Section \ref{sec:subgroups} we introduce notation for
the features of the subgroup $H$ used in the analysis.
 In Section \ref{sec:flattening} we show each block can be helpfully
 be viewed as lying over the product of a poset and a Stone space. 
Section \ref{sec:blocktop} introduces notation for a basis
 for the topology, and in Section \ref{sec:topology} we describe the
 topology in terms of the toral lattice. In Section \ref{sec:GrassQ},
 we explain how to decompose the Stone space factor up to finite error
 as a product of standard factors which are compactifications of the
 set of sublattices. Finally, in Section \ref{sec:normalizers} we
 establish the upper semicontinuity result for normalizers that is
 used in Section \ref{sec:partition}.

\section{The partition}
\label{sec:partition}
In this section, we will show that the space $\fX_G=\sub(G)/G$ may be decomposed into
finitely many blocks $\cV^G_H$, each dominated by a subgroup $H$ with
finite Weyl group. Later we show that each such block takes a standard form.

\subsection{Strategy}
The subspace $\cV^G_H$ is constructed as follows. We start with
the space $\Phi H$ of $H$-conjugacy classes of subgroups of $H$ with
finite Weyl groups. This  is a compact, Hausdorff, totally disconnected space,
so we may choose compact open neighbourhood $\cN^H_H$ of $H$ in $\Phi H$.
We will show (Lemma \ref{lem:PhiHPhiG}) that $\cN^H_H$ may be
taken small enough that its image $\cN^G_H$ in $\sub(G)/G$ consists of subgroups
with finite Weyl group. Now we take $\cV^G_H=\Lambda_{ct}(\cN^G_H)$ to be the set of
conjugacy classes cotoral in an element of $\cN^G_H$, and we will show
this is clopen in the Zariski topology. 

Thus all conjugacy classes in $\cV^G_H$ are represented by subgroups
$K$ of $H$ with $K$ cotoral in a subgroup $H'$ of $H$ close to
$H$. This is the first justification for saying the block is
`dominated' by $H$, and we will later see how the structure of
$\cV^G_H$ is also controlled by $H$.

\subsection{Three continuity lemmas}

\begin{lemma}
The functor $K\longmapsto K_{\e}:=K\cap H_e$ on subgroups of $H$ is continuous. 
\end{lemma}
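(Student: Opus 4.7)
The plan is to prove continuity with respect to the Hausdorff metric topology on $\sub(H)$. Since $\sub(H)$ is compact metrisable, it suffices to check sequential continuity: given $K_n \to K$ in $\sub(H)$, we must show $K_n \cap H_e \to K \cap H_e$ in $\sub(H_e)$.

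The crucial structural fact is that the identity component $H_e$ of the Lie group $H$ is both closed and open in $H$: closed as a connected component, open because Lie groups are locally connected (equivalently, $H/H_e$ is discrete). This clopenness gives the handle needed for both halves of the argument.

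I will use the standard characterisation of Hausdorff convergence: $K_n \to K$ if and only if $\limsup K_n = K = \liminf K_n$, where $\limsup K_n$ consists of subsequential limits of points $k_{n_j} \in K_{n_j}$ and $\liminf K_n$ consists of limits of sequences with $k_n \in K_n$. The verification splits into two inclusions. For the upper bound, if $k \in \limsup(K_n \cap H_e)$, choose a subsequence $k_{n_j} \in K_{n_j} \cap H_e$ with $k_{n_j} \to k$; closedness of $H_e$ gives $k \in H_e$, and $K_n \to K$ gives $k \in K$, so $k \in K \cap H_e$. For the lower bound, given $k \in K \cap H_e$, pick $k_n \in K_n$ with $k_n \to k$; since $H_e$ is open and contains $k$, eventually $k_n \in H_e$, hence eventually $k_n \in K_n \cap H_e$, showing $k \in \liminf(K_n \cap H_e)$.

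There is no substantive obstacle: the entire argument rests on the clopenness of $H_e$ inside $H$, and the two inclusions are a routine unwinding of the definition of the Hausdorff metric. The only point worth noting is that without openness of $H_e$ the lower-bound argument would fail, so it is essential that we are working with a Lie group rather than a general compact topological group.
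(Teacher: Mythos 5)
Your proof is correct and rests on exactly the same structural fact as the paper's: $H_e$ is clopen in $H$, equivalently (by compactness) there is an $\epsilon>0$ so that every point of $H$ outside $H_e$ lies at distance $>\epsilon$ from $H_e$. The paper compresses the argument to a single sentence invoking this $\epsilon$, while you spell out the same idea via the Kuratowski $\limsup/\liminf$ characterisation of Hausdorff convergence; the content is identical.
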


\begin{proof}
We need to check that if $\Gamma^i\lra \Gamma^*$ then this is also 
true of the parts inside $H_e$. This is clear since we may find a 
number $\epsilon >0$ so that every point of $H$ outside $H_e$ has 
distance $>\epsilon$ from $H_e$. 
\end{proof}

Similarly, if $\Gamma\subseteq H_e=S\times_ZT$ we may consider the 
inverse image $\Gammat$ under the map $p: S\times T=\Ht \lra 
H=S\times_ZT$. 

\begin{lemma}
The functor $\tilde{(\cdot)}$ on subgroups of $H_e$  is continuous. \qqed 
\end{lemma}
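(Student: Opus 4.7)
The plan is to exploit the fact that $p: \tilde{H} = S\times T \lra H = S\times_Z T$ is a finite covering map (the kernel $Z$ is finite central in $\tilde{H}$). Since a finite covering of compact Lie groups is a local isometry for any bi-invariant Riemannian metrics chosen compatibly, this will let me transfer Hausdorff convergence from $H_e$ to $\tilde{H}$.

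First I would fix bi-invariant metrics on $H_e$ and $\tilde{H}_e$ so that $p$ is a local isometry, and extract a positive constant $\eps_0>0$ such that for every $\tilde{x}\in \tilde{H}_e$ the restriction of $p$ to the ball $B_{\eps_0}(\tilde{x})$ is an isometry onto $B_{\eps_0}(p(\tilde{x}))$. Such an $\eps_0$ exists because $Z$ is finite, so the distance from $\tilde{x}$ to the other points in its fibre $p^{-1}(p(\tilde{x}))=Z\cdot \tilde{x}$ is bounded below uniformly in $\tilde{x}$ (by left-translation invariance it suffices to bound the distance from the identity to the nontrivial elements of $Z$).

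Second, given $\eps<\eps_0$, I would show that if $\Gamma,\Gamma'\subseteq H_e$ satisfy $d^{\mathrm{Haus}}_{H_e}(\Gamma,\Gamma')<\eps$, then $d^{\mathrm{Haus}}_{\tilde H_e}(\tilde\Gamma,\tilde\Gamma')\leq \eps$. Take $\tilde{x}\in \tilde\Gamma=p^{-1}(\Gamma)$, set $x=p(\tilde{x})\in \Gamma$, and pick $y\in \Gamma'$ with $d(x,y)<\eps<\eps_0$. By the local isometry property there is a unique lift $\tilde{y}\in B_{\eps_0}(\tilde{x})$ of $y$, with $d(\tilde{x},\tilde{y})=d(x,y)<\eps$; since $y\in \Gamma'$, we get $\tilde{y}\in p^{-1}(\Gamma')=\tilde\Gamma'$. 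Running the same argument with the roles of $\Gamma$ and $\Gamma'$ exchanged gives the other inclusion, hence the Hausdorff bound. This shows $\Gamma\mapsto \tilde\Gamma$ is continuous at every point.

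I do not expect a serious obstacle: the statement is essentially a lift-the-convergence argument, and the only subtle point is producing the injectivity radius $\eps_0$. That is easy here because $Z$ is a finite subgroup of a compact Lie group, so the closed set $Z\setminus\{1\}$ has a positive distance from $1$, and bi-invariance of the metric propagates this bound uniformly over $\tilde H_e$. Everything else is formal: preimages of subgroups are subgroups, and $p^{-1}$ commutes with the set-theoretic operations used in the Hausdorff distance.
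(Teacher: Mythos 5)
Your proof is correct. The paper actually gives no proof at all here — the lemma is stated and immediately closed with a QED, being treated as evident because $p\colon S\times T\to S\times_Z T$ is a finite covering. Your argument fills in exactly the obvious justification: $p$ is a local isometry with a uniform injectivity radius $\eps_0>0$ (by finiteness and centrality of $Z$ together with bi-invariance of the metric), and from this you derive a Lipschitz bound $d^{\mathrm{Haus}}_{\tilde H_e}(\tilde\Gamma,\tilde\Gamma')\leq d^{\mathrm{Haus}}_{H_e}(\Gamma,\Gamma')$ whenever the right-hand side is below $\eps_0$. This is the natural route and matches the spirit of the one-line proof given for the preceding lemma (bounding the distance between components, respectively here between sheets of the cover). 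No gaps.
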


Any subgroup $K$ is cotoral in a subgroup $\Khat$ with finite Weyl 
group (the inverse image of the maximal torus of $W_G(K)$), and this 
is unique up to conjugacy, allowing us to define $\omega (K)=\omega 
(\Khat)$. We recall the perennially useful result \cite{ratmack}
stating that $\omega $ is continuous. 

\begin{lemma}
  \label{lem:omegacts}
The functor $\omega : \sub (H)/H\lra \Phi H$ on subgroups   is continuous. \qqed 
\end{lemma}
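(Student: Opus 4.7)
The plan is to exploit the compactness of $\sub(H)$ in the Hausdorff metric and to reduce the continuity of $\omega$ to the behaviour of cotoral extensions along Hausdorff-convergent sequences. Given $K_i \to K$ in $\sub(H)$, I would choose representatives $\hat{K}_i \supseteq K_i$ of $\omega(K_i)$ (so $K_i \trianglelefteq \hat{K}_i$, the quotient $\hat{K}_i/K_i$ is a torus, and $W_H(\hat{K}_i)$ is finite). By compactness, pass to a subsequence so that $\hat{K}_i \to L$ in $\sub(H)$, and refine further so that the bounded integers $\dim \hat{K}_i$ and $\dim(\hat{K}_i/K_i)$ stabilise. Since any two subsequential limits will be $H$-conjugate once one of them is identified with $\hat{K}$, it suffices to show $L$ is $H$-conjugate to $\hat{K}$.

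The verification that $L$ represents $\omega(K)$ breaks into three parts. First, $K \subseteq L$ and $K \trianglelefteq L$: inclusions and normality pass to Hausdorff limits by continuity of multiplication and inversion, as in the earlier two continuity lemmas in this section. Second, $L/K$ is a torus: one argues that the stabilised dimension $\dim(\hat{K}_i/K_i)$ matches $\dim L - \dim K$, and that a Hausdorff limit of a sequence of $d$-dimensional subtori of the ambient compact group is again connected of dimension $d$, so $L/K$ is a torus. Third, and most importantly, $W_H(L)$ is finite: here I would invoke the upper semicontinuity of the normalizer functor $K \mapsto N_H(K)$ established in Section \ref{sec:normalizers}, which forces $\dim N_H(L) \leq \dim L$ in the limit; combined with $L \subseteq N_H(L)$, this yields $\dim W_H(L) = 0$. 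By the uniqueness (up to $H$-conjugacy) of the cotoral finite-Weyl extension above $K$, one concludes that $L$ is conjugate to $\hat{K}$, as required.

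The main obstacle is the finiteness of $W_H(L)$: having finite Weyl group is not closed under arbitrary Hausdorff limits, and it is precisely here that the upper semicontinuity of the normalizer from Section \ref{sec:normalizers} is indispensable. The torus step is also delicate, because $\dim$ is only lower semicontinuous under Hausdorff convergence of subgroups in general; one evades this by working with the cotoral quotients $\hat{K}_i/K_i$, whose dimensions are uniformly bounded by the rank of $H$ and therefore stabilise after passing to a subsequence. This is essentially the argument of \cite{ratmack}.
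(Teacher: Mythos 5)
The paper gives no proof of this lemma: it is stated with a closing box and the preceding sentence simply cites \cite{ratmack}. So your proposal is an independent reconstruction rather than something to be matched against a proof in this paper. The strategy you take — pass to a Hausdorff-convergent subsequence of representatives $\hat K_i$, and verify that the limit $L$ satisfies the defining properties of $\omega(K)$ (namely $K \trianglelefteq L$, $L/K$ a torus, $W_H(L)$ finite), then invoke uniqueness of the cotoral finite-Weyl extension — is a reasonable route, and your use of Section~\ref{sec:normalizers} is in the right spirit. But there are two substantive problems.

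First, the claim that ``a Hausdorff limit of a sequence of $d$-dimensional subtori \ldots is again connected of dimension $d$'' is false: dimension is only lower semicontinuous in the Hausdorff topology (e.g.\ the subtori $\{(z,z^n)\}$ of $T^2$ converge to all of $T^2$). Passing to a subsequence where $\dim(\hat K_i/K_i)$ is constant does not prevent the limit from jumping up in dimension. Fortunately this claim is also unnecessary: $L/K$ is a Hausdorff limit of connected subgroups, hence connected, and commutators $[a_i,b_i]\in K_i$ pass to the limit, so $L/K$ is abelian; a connected compact abelian Lie group is a torus, whatever its dimension. You should drop the dimension-matching step entirely.

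Second, and more seriously, the finiteness of $W_H(L)$ does not follow from upper semicontinuity of normalizers as you invoke it. Theorem~\ref{thm:normalizersusc} applied to $\hat K_i\to L$ gives $N_H(\hat K_i)\subseteq N_H(L)$ for large $i$, which bounds $N_H(L)$ from \emph{below}, not above; from this you can conclude $\dim N_H(L)\geq\dim\hat K_i$, which is the wrong direction for what you need. To get $\dim N_H(L)\leq\dim L$ you need the stronger statement proved in the Corollary at the end of Section~\ref{sec:normalizers}, namely that $N_H(\hat K_i)_e\to N_H(L)_e$; the nontrivial half of that corollary (that $N_H(L)_e$ normalizes $\hat K_i$ for large $i$) is exactly what produces the upper bound. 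With that in hand, $N_H(\hat K_i)_e=(\hat K_i)_e\subseteq L$ forces $N_H(L)_e\subseteq L$ and so $W_H(L)$ is finite. As written, your argument asserts the needed inequality without the machinery that actually delivers it.
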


\subsection{Fusion}
We show that provided the neighbourhood $\cN_H^H$ of $H$ in $\Phi
H$ small enough, its image in $\sub
(G)/G$ is a neighbourhood of  $(H)$ in $\Phi G$. 
We start from the map  $\sub (H)\lra \sub(G)$,  and consider the composite 
$$\Phi H\lra \sub (H)/H\lra \sub(G)/G.$$

\begin{lemma}
\label{lem:PhiHPhiG}
If $H$ has finite Weyl group then the singleton $(H)_H$ has a neighbourhood $\cN_H^H$
in $\Phi H$ which maps into $\Phi G$. Furthermore the image of 
$\cN_H^H$ contains a neighbourhood $\cN^G_H$ of $(H)_G$ in $\Phi 
G$. 
  \end{lemma}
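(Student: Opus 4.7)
The plan is to build $\cN_H^H$ in two steps: first, shrink it so that every class maps into $\Phi G$; second, exhibit a neighbourhood $\cN^G_H$ of $(H)_G$ in $\Phi G$ contained in the image.

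For the first step, since $W_G(H)$ is finite we have $\dim N_G(H)=\dim H$. I will apply the upper semicontinuity of normalizers from Section~\ref{sec:normalizers} to obtain an open Hausdorff neighbourhood $V$ of $H$ in $\sub(H)$ on which $W_G(\cdot)$ remains finite. Since $\sub(H)\to\sub(H)/H$ is open (the quotient by a continuous group action) and $\Phi H$ is Hausdorff and totally disconnected, I may take $\cN_H^H$ to be a clopen neighbourhood of $(H)_H$ in $\Phi H$ contained in the image of $V$; this satisfies the first conclusion.

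For the second step, the Montgomery--Zippin tube theorem furnishes a Hausdorff neighbourhood $U'$ of $H$ in $\sub(G)$ all of whose elements are subconjugate to $H$; its image $\bar U\subseteq\sub(G)/G$ is open. Let $C\subseteq\sub(H)$ be the preimage of $\Phi H\setminus \cN_H^H$; being closed in the compact space $\sub(H)$ and landing in the Hausdorff $\sub(G)/G$, its image $\bar C$ is closed. Moreover $(H)_G\notin\bar C$: by dimension considerations, any $K\subseteq H$ that is $G$-conjugate to $H$ must equal $H$, and $H\notin C$. Setting
$$\cN^G_H=\Phi G\cap\bar U\cap\bigl(\sub(G)/G\setminus\bar C\bigr)$$
gives an open neighbourhood of $(H)_G$ in $\Phi G$. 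For $(K)_G\in\cN^G_H$, subconjugacy provides a representative $K'\subseteq H$; finiteness of $W_G(K')$ descends to the subgroup $W_H(K')\leq W_G(K')$, so $(K')_H\in\Phi H$; and $(K)_G\notin\bar C$ forces $(K')_H\in\cN_H^H$. Hence $\cN^G_H$ lies in the image of $\cN_H^H$, completing the proof.

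The main obstacle I anticipate is the first step: one must extract from Section~\ref{sec:normalizers} a statement strong enough to conclude that $W_G(K)$ itself is finite, not merely a dimension bound. Pure upper semicontinuity of $\dim N_G(\cdot)$ would only give $\dim N_G(K)\leq\dim H$ near $H$, which is insufficient when $K$ has strictly smaller dimension than $H$ (as for $D_{2n}\subseteq O(2)$ inside $SO(3)$); one needs local control of the Weyl group itself.
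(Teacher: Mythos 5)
The gap you flag at the end of your proposal is real, and it is exactly the point the paper's proof addresses; your first step as written does not go through. Theorem \ref{thm:normalizersusc} is not a dimension statement: it gives the honest containment $N_G(K)\subseteq N_G(H)$ for all $K$ in a sufficiently small Hausdorff neighbourhood of $H$. But even this containment alone does not make $W_G(K)=N_G(K)/K$ finite, since $K$ may be much smaller than $N_G(H)$ (your own $D_{2n}\subseteq O(2)\subseteq SO(3)$ example). The missing idea is to use the chain $K\leq N_H(K)\leq N_G(K)$ and the identity $N_H(K)=N_G(K)\cap H$. Because you are working inside $\Phi H$, the class $(K)_H$ has finite $H$-Weyl group, so $N_H(K)/K=W_H(K)$ is finite; and the containment $N_G(K)\subseteq N_G(H)$ induces an embedding
$$\frac{N_G(K)}{N_H(K)}=\frac{N_G(K)}{N_G(K)\cap H}\hookrightarrow \frac{N_G(H)}{H}=W_G(H),$$
which is finite by hypothesis. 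Hence $K$ has finite index in $N_G(K)$ and $W_G(K)$ is finite. This two-step filtration is the entire content of the paper's proof of the first claim, and it is what converts upper semicontinuity of normalizers into local finiteness of $G$-Weyl groups on $\Phi H$.

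Your second step is fine and is in fact more detailed than the paper's treatment of the ``furthermore'' clause, which is essentially compressed into the observation that $N_H(K)=N_G(K)\cap H$ implies that any subgroup of $H$ with finite $G$-Weyl group has finite $H$-Weyl group (so that classes in $\Phi G$ near $(H)_G$, being subconjugate to $H$ by Montgomery--Zippin, are represented in $\Phi H$ and hence in $\cN_H^H$ when close enough). Your explicit construction of $\cN^G_H$ as $\Phi G\cap\bar U\cap(\sub(G)/G\setminus\bar C)$ is a legitimate way to make that precise.
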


\begin{proof}
Since $N_H(K)=N_G(K)\cap H$, if $K$ has finite $G$-Weyl group it has
finite $H$-Weyl group. It therefore suffices to show that if $H$ has
finite $G$-Weyl group it has a neighbourhood with the same property. 

By the upper semicontinuity of the normalizer (Theorem \ref{thm:normalizersusc}), we may choose a
neighbourhood of $H$ so that for all $K$ in the neighbourhood
$N_G(K)\subseteq N_G(H)$. 

We have containments $N_G(K)\supseteq N_H(K)\supseteq K$. The second
subquotient is $W_H(K)$. 

Now if $N_G(K)\subseteq N_G(H)$, this induces an embedding
$$\frac{N_G(K)}{N_H(K)}= \frac{N_G(K)}{N_G(K)\cap H} \subseteq 
\frac{N_G(H)}{H}=W_G(H). $$
\end{proof}

\subsection{Cotoral specialization}
We show that taking cotoral closures has good topological properties. 

  \begin{lemma}
    \label{lem:VHclopen}
If $H$ has finite Weyl group and $\cN_H^H$ is chosen so that $\cN^G_H$
consists of subgroups with finite Weyl group then the set
$\cV^G_H:=\Lambda_{ct} \cN^G_H$ is clopen in the constructible topology and closed under 
cotoral specialization.  
    \end{lemma}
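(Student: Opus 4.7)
The plan is to identify $\cV^G_H$ with the preimage $\omega^{-1}(\cN^G_H)$ of the continuous map $\omega: \sub(G)/G \lra \Phi G$ from Lemma \ref{lem:omegacts} (applied to $G$ in place of $H$), and then read off both claims from this description. The identification rests on the uniqueness statement defining $\omega$: the class $(K)$ lies in $\cV^G_H$ precisely when $K$ is cotoral in some $M$ with $(M)\in \cN^G_H$, and since by hypothesis every such $M$ has finite Weyl group, the unique-up-to-conjugacy property of the Weyl-finite overgroup $\hat K$ with $K$ cotoral in $\hat K$ forces $(M)=\omega(K)$. Conversely, any representative of $\omega(K)\in \cN^G_H$ witnesses $(K)\in\cV^G_H$.

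Once the equality $\cV^G_H=\omega^{-1}(\cN^G_H)$ is in hand, clopen-ness in the constructible topology is immediate. The space $\Phi G$ is a Stone space, so we may arrange (shrinking $\cN^H_H$ if necessary) that $\cN^G_H$ is clopen in $\Phi G$; continuity of $\omega$ in the h-topology---which coincides with the constructible topology on $\sub(G)/G$---then yields a clopen preimage.

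For closure under cotoral specialization, suppose $(L)$ is cotoral in $(K)\in \cV^G_H$, and pick representatives with $K$ cotoral in a representative $\hat K$ of $\omega(K)\in\cN^G_H$. The key point is transitivity of the cotoral relation in the compact Lie setting: the extension $1\to K/L \to \hat K/L \to \hat K/K \to 1$ is central because the connected group $\hat K/K$ acts on the torus $K/L$ by continuous automorphisms factoring through the discrete group $\Aut(K/L)$, hence trivially; the total group $\hat K/L$ is therefore compact, connected and $2$-step nilpotent, and so is itself a torus. The same triviality of the $\hat K/K$-action on $K/L$ shows that $L$ is $\hat K$-invariant, so $L$ is normal in $\hat K$. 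Thus $(L)$ is cotoral in $\omega(K)\in\cN^G_H$, and $(L)\in\cV^G_H$ as required.

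The main technical point is this transitivity step: both the normality and the torus-quotient condition rely in an essential way on the compactness/connectedness ingredients above (connected compact nilpotent Lie groups are tori, and a connected group acts trivially on a torus by automorphisms). The remainder of the argument is bookkeeping around the continuous retraction $\omega$ onto the Weyl-finite locus $\Phi G$.
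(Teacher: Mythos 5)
Your identification $\cV^G_H=\omega^{-1}(\cN^G_H)$ is a clean reorganization of exactly the idea the paper uses: in both the paper's ``closed'' and ``open'' steps the witness $H_i\in\cN^G_H$ with $\Gamma_i$ cotoral in $H_i$ is replaced by $\omega(\Gamma_i)$, which is precisely your equality. Packaging it as a single preimage under the continuous map $\omega$ gives clopen-ness in one stroke, and the identification is justified exactly as you say, by uniqueness up to conjugacy of the Weyl-finite cotoral overgroup.

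For closure under cotoral specialization, the paper simply invokes transitivity of the cotoral relation; you do too, but your sketch of \emph{why} transitivity holds is circular as written. Forming the quotient $\Khat/L$ and letting $\Khat/K$ act on $K/L$ both already presuppose $L\normal \Khat$, which you then ``deduce'' from the very triviality statement that needed it. A non-circular version runs one level up: since $K/L$ is abelian, $[K,K]\subseteq L$; the conjugation action of $\Khat$ on $K$ (valid because $K\normal\Khat$) descends to an action of $\Khat/K$ on the compact abelian Lie group $K/[K,K]$, whose automorphism group is discrete, so the connected group $\Khat/K$ acts trivially and in particular preserves the subgroup $L/[K,K]$, giving $L\normal\Khat$. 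After that your central-extension and nilpotent-implies-torus observations go through unchanged. Since the paper takes transitivity as known, the lemma is not endangered, but the argument you give for this step does not stand as written.
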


    \begin{proof}
Closure under cotoral specialization is true by definition, and 
since the cotoral  condition is transitive, elements of $\cV_H$
      are characterised as being cotoral in elements of $\cN^G_H$.
      
 First, let us see that $\cV_H$ is closed, i.e., that the condition of
 being cotoral in an element of $\cN^G_H$ is a closed condition.  Suppose
 that  $\Gamma^i$ is cotoral in $H_i$ for $(H_i)\in \cN^G_H$ and
 $\Gamma^i\lra \Gamma^*$. Since $\cN^G_H$ is closed under cotoral
 generalization, we may suppose $H_i=\omega (\Gamma_i)$.
 Now by continuity of $\omega$, we deduce
 $H_i=\omega (\Gamma^i)\lra \omega (\Gamma^*)=:H^*$.
 Since $\cN^G_H$ is $h$-closed, $H^*\in \cN^G_H$ and $\Gamma^*$ lies
 in $\cV_H$ as required.
 
On the other hand, to see $\cV^G_H=\Lambda_{ct} \cN^G_H$ is open we need only show that 
if $\Gamma'$ is cotoral in $H'\in \cN^G_H$ then there is a neighbourhood of 
$\Gamma'$ entirely in $\Lambda_{ct} \cN^G_H$. Otherwise there is a sequence of 
points $\Gamma_i$ approaching $\Gamma'$ that are not cotoral in $\cN^G_H$. 
However $\omega (\Gamma_i)\lra \omega (\Gamma')\in \cN$, which is impossible
because $\cN^G_H$ is open. 
\end{proof}

\subsection{Constructing the partition}
We describe here how to form the partitition of
$\sub(G)/G$. In effect it is just a greedy algorithm, repeatedly
taking a neighbourhood of one of the largest remaining subgroups.
We need to prove the process terminates and that it has the
topological properties required.

\begin{thm}
  \label{thm:partition}
The Balmer spectrum $\sub(G)/G$ can be partitioned into finitely many
sets of the form $\cV_H$ (i.e.,  closures under cotoral specialization
of standard Hausdorff clopen neighbourhoods of subgroups).  
\end{thm}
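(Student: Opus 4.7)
I would exploit the compactness of $\Phi G$ in the h-topology. Since $\omega:\sub(G)/G\to\sub(G)/G$ is continuous (Lemma \ref{lem:omegacts}) and $\sub(G)/G$ is h-Hausdorff, the fixed locus $\{(K):\omega(K)=(K)\}$ is h-closed; but this fixed locus is precisely $\Phi G$, since $\omega(K)=K$ exactly when the maximal torus of $W_G(K)$ is trivial. Being a closed subspace of the Stone space $\sub(G)/G$, the space $\Phi G$ is itself a Stone space: h-compact, h-Hausdorff and totally disconnected.

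Next, for each $(H)\in\Phi G$ Lemma \ref{lem:PhiHPhiG} provides an h-clopen neighbourhood $\cN^G_H$ of $(H)$ in $\Phi G$. These cover the compact space $\Phi G$, so a finite subcover $\cN^G_{H_1},\ldots,\cN^G_{H_n}$ suffices. I would then pass to a disjoint refinement using the Boolean algebra of clopens, for instance
$$\cN_i\;:=\;\cN^G_{H_i}\setminus\bigcup_{j<i}\cN^G_{H_j},$$
discarding the empty pieces and (relabeling if necessary) choosing a representative $H_i\in\cN_i$. The result is a partition $\Phi G=\cN_1\sqcup\cdots\sqcup\cN_m$ into finitely many h-clopen subsets, each of which is a clopen neighbourhood of a subgroup with finite Weyl group and consists entirely of such subgroups.

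Finally I would set $\cV^G_{H_i}:=\Lambda_{ct}(\cN_i)$. Lemma \ref{lem:VHclopen} applies (the hypothesis that $\cN_i$ consists of finite-Weyl subgroups is automatic since $\cN_i\subseteq\Phi G$) and gives that each $\cV^G_{H_i}$ is Zariski-clopen and closed under cotoral specialization. The partition property follows from the identification $\cV^G_{H_i}=\omega^{-1}(\cN_i)$: given $(K)$ cotoral in $H'\in\cN_i$, the quotient $H'/K$ is a subtorus of $W_G(K)$, and if it were not a maximal torus, the preimage $H''$ of a maximal torus containing it would satisfy $H'\lhd H''$ (since $H''/K$ is abelian) with non-trivial torus quotient, forcing $W_G(H')$ to be infinite and contradicting $H'\in\Phi G$; hence $(H')=\omega(K)$, so $\omega(K)\in\cN_i$. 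Since $\omega$ restricts to the identity on $\Phi G$ and the $\cN_i$ partition $\Phi G$, the preimages partition $\sub(G)/G$.

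The one delicate point I foresee is the identification $\cV^G_H=\omega^{-1}(\cN^G_H)$, which rests on the uniqueness-up-to-$G$-conjugacy of the maximal cotoral extension of $K$ to a subgroup with finite Weyl group. Everything else is a straightforward combination of the h-compactness of $\Phi G$, the existence of h-clopen neighbourhoods, and the Boolean-algebra gymnastics needed to refine a finite cover to a partition.
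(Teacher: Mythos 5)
Your proof is correct and, if anything, more economical than the paper's. The key observation you make explicit -- that $\cV^G_H = \Lambda_{ct}(\cN^G_H)$ coincides with $\omega^{-1}(\cN^G_H)$, because the finite-Weyl cotoral extension $\omega(K)$ of $K$ is unique up to conjugacy and any $H'\in\Phi G$ containing $K$ cotorally must already be $\omega(K)$ -- means a finite clopen partition of $\Phi G$ pulls back along $\omega$ to a clopen partition of all of $\sub(G)/G$ in a single step. The paper instead runs a descending ``greedy'' induction, peeling off cotorally maximal elements of a shrinking clopen set $R_s$ and arguing termination by a height bound; but since $\max(R_0)=\Phi G$ and the union of $\omega^{-1}(\cN^j_0)$ over a finite clopen cover of $\Phi G$ is already everything, that induction in fact terminates at $R_1=\emptyset$, which is precisely what your argument shows directly. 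Both proofs rest on the same three ingredients: continuity of $\omega$ (Lemma~\ref{lem:omegacts}), Lemma~\ref{lem:PhiHPhiG} to ensure the neighbourhoods lie in $\Phi G$, and Lemma~\ref{lem:VHclopen} to get Zariski-clopenness of the cotoral closures; your disjointness argument via uniqueness of the maximal torus of $W_G(K)$ is the same one the paper uses.

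One small caution, not affecting the validity of the theorem as stated: after the Boolean refinement $\cN_i=\cN^G_{K_i}\setminus\bigcup_{j<i}\cN^G_{K_j}$, the subgroup $H_i$ you pick as a representative in $\cN_i$ need not ``dominate'' the block in the strong sense used later in the paper (all elements of $\cN_i$ are subconjugate to the original $K_i$, not necessarily to $H_i$). If you want each block to be presented as dominated by its named subgroup -- which the paper's greedy construction arranges by choosing a neighbourhood \emph{of} $H^{j+1}_s$ at each step -- you should either retain $K_i$ as the dominating subgroup for the nonempty piece $\cN_i\subseteq\cN^G_{K_i}$, or shrink $\cN_i$ to a smaller clopen neighbourhood of $H_i$ coming from Lemma~\ref{lem:PhiHPhiG} applied to $H_i$ and repeat the compactness/refinement argument. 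For the partition statement itself this is immaterial.
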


\begin{proof}
  Take $R_0=\sub(G)/G$, and then form a
 sequence of clopen sets
 $$\sub(G)/G=R_0\supset R_1\supset R_2\supset \cdots \supset R_r\supset
 R_{r+1}=\emptyset$$
one step at a time. 
Suppose we have defined $R_0, \ldots, R_s$.  
The set of cotorally maximal elements of $R_s$ is $h$-closed (if
$x_s\lra x_*$ in $R_s$ with $x_s$ maximal, then by Lemma
\ref{lem:omegacts},  $x_s=\omega (x_s)\lra
\omega (x_*)$, and $x_*=\omega (x_*)$). Accordingly, the set $\max
(R_s)$ of cotorally maximal elements of $R_s$ is $h$-compact. We
partition $\max(R_s)$ into finitely many clopen sets as follows.
The set of conjugacy classes is countable, so we choose an enumeration
$H'_1, H'_2, \ldots$, and then choose neighbourhoods of the groups in
turn. First $H_s^1=H'_1$, and we choose a standard clopen neighbourhood
$\cN_s^1$ inside $\max (R_s)$. At the $j$th stage we will have $H_s^1,
\ldots , H_s^j$ and disjoint clopen neighbourhoods $\cN_s^i$ of
$H_s^i$. Now let $\cN_s^{\leq j}=\bigcup_{i\leq j}\cN^i$ and
 take $H_s^{j+1}=H'_\alpha$ where  $H'_\alpha$ is the first of the
 enumerated subgroups not already contained in
$\cN_s^{\leq j}$. Now let $\cN_s^{j+1}$ to be a standard clopen
neighbourhood inside $\max(R_s)$
and disjoint from $\cN_s^{\leq j}$; this exists because the topology
on a Stone space is generated by clopen sets. The sets $\cN_s^{\leq j}$ cover
$\max(R_s)$ since $H'_i$ is certainly covered by the $i$th stage, and
by compactness this has a finite subcover. Hence $\max (R_s)$ is
covered by $\cN_s^1, \ldots , \cN_s^{n(s)}$ for some $n(s)$.

 By closing these neighbourhoods $\cN_s^j$ under cotoral
specialization, we obtain sets
$\cV_{H_s^1}, \ldots , \cV_{H_s^{n(s)}}$,
each of which is clopen by Lemma \ref{lem:VHclopen}. Next, we argue
that these sets are  disjoint from each other and also from $H_i^j$ with
$i<s$. Suppose 
that $\cV_{H_i^j}\cap \cV_{H_s^k}\neq \emptyset$ with $i\leq s$. Then
there is a subgroup $K$ cotoral both in  a subgroup $H_i^j\in \cN_i^j$ and in a subgroup $H_s^k\in
\cN_s^k$. After conjugation we may suppose $H_i^j, H_s^k\leq N_G(K)$, and 
$H_i^j/K$ and $H_s^k/K$ are two subtori of $W_G(K)$. By uniqueness of 
maximal tori up to conjugation  this shows that up to conjugation 
$H_i^j, H_s^k$ are cotoral in a larger group, contradicting the fact
that $H_s^k$ is not in $\cV_{H_i^j}$ for $i < s$. 

This completes the work for $R_s$ and we then take
$$R_{s+1}=R_s\setminus \bigcup_{j=1}^{n(s)}\cV_{H_s^j}, $$
which is still clopen since the sets $\cV_{H_s^j}$ are clopen.
Cotorally maximal subgroups are of height $\leq r$, so the subgroups
$H_s^j$ are of height $\leq r-s$, and so $R_{r+1}=\emptyset$ as
required.
\end{proof}

\section{The structure of subgroups}
\label{sec:subgroups}
In preparation for studying the space of subgroups, we describe the 
structure of individual subgroups of $H$.

\subsection{Some standard notation}
For any  compact Lie group $H$, there is a short exact sequence 
$$1\lra H_e\lra H \lra H_d\lra 1. $$
We say that a subgroup $\Gamma$ of $H$ is {\em full} if it maps onto 
$H_d$. 

By the structure theorem for compact connected Lie groups, 
$$H_e=\Sigma \times_Z T$$
with $\Sigma $ semisimple, $T$ a torus and $Z$ a finite central 
subgroup.

\begin{lemma}
\label{lem:SigmaTchar}
  The images  of $T$ and $\Sigma$ in $H_e$ are characteristic, and
  hence normal and invariant under the action of 
$H_d$. 
\end{lemma}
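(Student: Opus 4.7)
The plan is to identify $T$ and $\Sigma$ intrinsically inside $H_e$ in a way that is manifestly invariant under all automorphisms of $H_e$, and then deduce the statement about $H_d$ from the fact that $H_e$ is normal in $H$.

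First I would show that the image of $T$ in $H_e$ is characteristic by identifying it as the connected component of the identity of the centre of $H_e$. Since $\Sigma$ is semisimple, its centre is finite, so $Z(H_e)$ is the image of $Z(\Sigma)\times T$ in $H_e=\Sigma\times_ZT$, which is a finite extension of $T$; thus $T$ is the identity component $Z(H_e)_e$. The formation of the centre and of the identity component are both preserved by every continuous automorphism, so $T\triangleleft H_e$ is characteristic. Next I would identify the image of $\Sigma$ as the commutator subgroup $[H_e,H_e]$: since $T$ is central, all commutators lie in the image of $\Sigma$, and conversely $\Sigma$ being semisimple equals its own commutator subgroup, so its image is contained in $[H_e,H_e]$. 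Hence $\Sigma = [H_e,H_e]$, which is again characteristic.

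To conclude normality in $H$, I would use that $H_e$ is a normal subgroup of $H$, so conjugation by any $h\in H$ restricts to an automorphism of $H_e$. Being characteristic in $H_e$, both $T$ and $\Sigma$ are therefore preserved by every such conjugation, i.e., normal in $H$. The induced action of the component group $H_d=H/H_e$ on $H_e$ (well-defined up to inner automorphisms of $H_e$) then preserves $T$ and $\Sigma$, as required.

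I do not anticipate a serious obstacle: the only subtlety is remembering that ``characteristic'' must be verified for the \emph{image} of $T$ (respectively $\Sigma$) in $H_e=\Sigma\times_ZT$ rather than for the factor in the product $\Sigma\times T$, and the intrinsic descriptions $Z(H_e)_e$ and $[H_e,H_e]$ handle this cleanly.
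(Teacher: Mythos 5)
Your proposal is correct, and the treatment of $T$ (as the identity component of the centre of $H_e$) is the same as the paper's. For $\Sigma$ you take a genuinely different, though closely related, route: you identify the image of $\Sigma$ intrinsically as the commutator subgroup $[H_e,H_e]$, using that $T$ is central on one side and that a compact connected semisimple group is perfect on the other. The paper instead observes that $\Sigma$ is normal with torus quotient, and that any homomorphism from a semisimple group to a torus is trivial, so the image of $\Sigma$ under any automorphism must land back in $\Sigma$. Both arguments are short and standard; yours gives an explicit intrinsic description ($\Sigma = [H_e,H_e]$) which is perhaps a touch more transparent, while the paper's version characterizes $\Sigma$ as the receptacle of all semisimple subgroups, a formulation it can reuse elsewhere when discussing subgroups close to $H$. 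The final step (characteristic in $H_e$ plus $H_e\normal H$ implies normal in $H$ and $H_d$-invariant) is the same in both.
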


\begin{proof}
The subgroup $T$ is the identity component of the centre of the
identity component.

We note $\Sigma$ is normal in $H_e=\Sigma \times_Z T$, and the
quotient is a torus. Any map from a semisimple group $\Sigma'$ to a torus is
trivial, and hence the image of any map  $\Sigma'\lra H_e$ lies in $\Sigma$.
 \end{proof}

 We write $T=T_zH$ and call it the {\em central torus} of $G_e$.

The {\em toral lattice} is the integral representation of $H_d$ on $\Lambda_0:=H_1(T; \Z)$ is the most
important invariant of $H$ as far as understanding the top block is
concerned. Rationalizing,   $H_d$ has a rational  represention $V=H_1(T; \Q)$, with 
   isotypical decomposition  $V=V_1\oplus \cdots \oplus V_s$.
Now the exponential map gives an isomorphism $V^{\R}/\Lambda_0 \cong T$. We may restrict the exponential
map to $V_i^{\R}$, and obtain an embedding
$$\Tt_i:=V_i^{\R}/\Lambda_i\lra T$$
where $\Lambda_i=\Lambda\cap V_i$. However the combined map
$$\Tt=\Tt_1\times \cdots \times \Tt_z\lra T$$
may be a non-trivial finite cover (because $\Sigma_i \Lambda_i $ is of
rank $z$, but may be a proper subgroup of $\Lambda_0$); we write $L$ for
its kernel.

We will often have to treat the trivial representation $V_1$
differently from the rest, so we write
$$V_f:=V_2\oplus \cdots \oplus V_s, \Lambda_f:=\Lambda\cap V_f,
\Tt_f:=V_f^{\R}/\Lambda_f$$
and 
$$L_f:=\ker (\Tt_1\times \Tt_f \lra T).$$
\begin{remark} 
We note that the finite subgroup $L_f$ may be non-trivial.  The
simplest example occurs when $H$ is the semidirect product of $T^2$ by the group of
order 2 exchanging factors when $L_f=\{ (1, 1), (-1,-1)\}$.
\end{remark}

\begin{remark}
When dealing with non-trivial kernels, we may proceed as follows. 
If $C$ is an $H_d$-invariant subgroup of $T$ then  we take $\Ac
=C^{H_d}=C\cap \Tt_1, \Bc=C\cap \Tt_f$ and let bars denote
the projections of $C$ into $\Tt_1$ and $\Tt_f$. We then have product bounds
for $\tilde{C}$ and the consequent product bounds for $C$:
$\Ac \times \Bc \subseteq \tilde{C} \subseteq \Abar\times 
\Bbar$.

If $(a,b)$ lies in a $H$-invariant subgroup $C$, so does 
 $g(a,b)=(a, gb)$, and hence $(0, (1-g)b)$ for all $g\in H$, 
so  we have 
$$J\Bbar=\{ (g-g')b\st b\in \Bbar, g, g'\in H_d\}\subseteq
\Bc\subseteq \Bb$$
Thus $\Bbar/\Bc$ is a quotient of  $H_0(W;
\Bbar)=\Bbar /J\Bbar$ and therefore  of finite order with a trivial action of $H_d$. If it has exponent $n$ then
 $\Abar/\Ac$ is also annihilated by $n$ and hence of finite order. In the favourable case that
 $\Bc/J\Bbar=0$ then $\Bc=\Bbar$ and $\Ac=\Abar$. 
\end{remark}

The following lemma gives some crude but useful information. 

\begin{lemma}
  Suppose  $K$ is a full subgroup of $H$ with 
  $K\cap H_e=\Sigma\times_Z C$ (all subgroups $K$ close enough to $H$  have this property). 

  (i) If $K$ has finite $H$-Weyl group then $C\supseteq T_1$ 
and hence 
  $C=(\Tt_1\times B)/L_f$ where $B\subseteq 
\Tt_f$ and $L_f$ is  the finite subgroup described above.

  (ii) If $K$ is cotoral in $H$ then $K\supseteq T_f$,   and hence 
  $C=(A\times \Tt_f )/L_f$ where $A\subseteq 
\Tt_1$ and $L_f$ is  the finite subgroup described above.  
\end{lemma}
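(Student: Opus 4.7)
\medskip

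\noindent\textbf{Proof proposal.}
The plan is to handle the two halves separately: in each case a soft structural hypothesis on $K$ forces a specific subtorus of $T = T_zH$ to sit inside $K$, and the stated form of $C$ is then a mechanical unwinding through the covering $p:\tilde T = \Tt_1\times \Tt_f \lra T$ with kernel $L_f$.

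For part (i) the key point is that the image $T_1$ of $\Tt_1$ in $T$ is central in all of $H$. Indeed, $T_1$ lies in $T$, which is characteristic in $H_e$ by Lemma \ref{lem:SigmaTchar}, so $T_1$ commutes with $\Sigma$ and $T$ inside $H_e$; moreover, since $V_1$ is by definition the trivial isotypical summand, $H_d$ acts trivially on $\Tt_1$ and hence on $T_1$. Therefore $T_1\subseteq N_H(K)$, so the connected group $T_1K/K$ injects into the finite group $W_H(K)$ and must be trivial. Hence $T_1\subseteq K\cap H_e$, and since $T_1\subseteq T$ we obtain $T_1\subseteq C$.

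For part (ii), the cotoral hypothesis says $K\trianglelefteq H$ with $H/K$ a torus, so $\Sigma = [H_e,H_e]\subseteq K$ and the conjugation action of $H$ on $H/K$ is trivial. The inclusion $H_e/(K\cap H_e)\hookrightarrow H/K$ then realises $H_e/(K\cap H_e)$ as a subtorus with trivial $H_d$-action. Restricting to $\Tt_f$ gives an $H_d$-equivariant map $\Tt_f\lra H_e/(K\cap H_e)$; rationally the source has no trivial isotypical summand while the target is trivial as an $H_d$-module, so the rationalisation vanishes. Hence the image of the connected group $\Tt_f$ is a finite subtorus, and therefore trivial. Thus $T_f\subseteq K\cap H_e$, giving $K\supseteq T_f$ and in particular $T_f\subseteq C$.

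It remains to derive the form of $C$. Let $\Ct:=p^{-1}(C)\subseteq \Tt_1\times \Tt_f$; by definition $\Ct\supseteq L_f$ always, and $C=\Ct/L_f$. In case (i) the inclusion $T_1\subseteq C$ gives $\Tt_1\times\{0\}\subseteq \Ct$; any subgroup of $\Tt_1\times \Tt_f$ containing $\Tt_1\times\{0\}$ is the preimage of its projection to $\Tt_f$, so $\Ct=\Tt_1\times B$ with $B=\pi_{\Tt_f}(\Ct)$, and $C=(\Tt_1\times B)/L_f$. Case (ii) is symmetric: $T_f\subseteq C$ forces $\{0\}\times \Tt_f\subseteq \Ct$, so $\Ct=A\times \Tt_f$ with $A=\pi_{\Tt_1}(\Ct)$ and $C=(A\times \Tt_f)/L_f$. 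There is no real obstacle; the only thing to track carefully is that $L_f$ is automatically contained in $\Ct$ so these quotients are well defined, and that in each case the presence of the full factor $\Tt_1$ or $\Tt_f$ makes $\Ct$ a pure product.
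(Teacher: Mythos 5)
Your argument is correct and follows the same overall strategy as the paper: show $T_1$ normalizes $K$ and finiteness of $W_H(K)$ forces $T_1\subseteq C$; in the cotoral case, use triviality of the $H_d$-action on $T/C$ (via $H/K$) to force $T_f\subseteq C$; then unwind through the covering $\Tt_1\times\Tt_f\to T$. The only real difference is that the paper quotes the $H(C,\sigma)$ and $C^+$ formalism from \cite{t2wqalg} (writing $N_H(K)=H(C^+,\sigma)$ and arguing via $T_1\subseteq T^{H_d}\subseteq C^+$ and $C^+=T$), whereas you reason directly with the normalizer $T_1K/K\subseteq W_H(K)$ and with the rational vanishing of the $H_d$-equivariant map $\Tt_f\to H_e/(K\cap H_e)$. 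Your version is therefore somewhat more self-contained, which is a small gain; the mathematical content is the same.
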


\begin{proof}
We have seen that $K=H(C, \sigma)$, and $N_H(K)=H(C^+, \sigma)$ where 
$C^+=\{t\st t^{-1}t^w\in C \mbox{ whenever } w\in H_d\}$.

If $K$ has finite Weyl group, this shows 
$T_1\subseteq T^{H_d}\subseteq C^+$. If $C\cap T_1\not = T_1$ then $T_1/C\cap 
T_1$ (and hence also $W_H(K)$)  is infinite. Thus $T_1\subseteq C$ and 
$\tilde{C}=\Tt_1\times B$ as required. 

If $K$ is cotoral in $H$  then since $C^+=T$. If we write $H_1(C;\Q)$
as a sum of isotypical pieces $W_\alpha$ then $W_\alpha =V_\alpha$ for all
$\alpha\neq 1$. Hence $T_f \subseteq C$ and $\tilde{C}=A\times \Tt_f$
as required. 
\end{proof}

\section{The structure of a block}
\label{sec:flattening}
We first consider the  top block $\cV^H_H$. It is finite over a space
defined by  the toral lattice $\Lambda_0=H_1(T_zH)$ as an 
integral representation of $H_d=\pi_0(H)$.  Using this finite map, we
can deduce the correponding statement for all blocks $\cV^G_H$. 
Discussion of the topology is deferred to Section
\ref{sec:blocktop}. 

\subsection{Reduction to toral groups}
We suppose $H_e=\Sigma\times_Z T$, and let $\Hb=H/\Sigma,
\Tb=T/Z$, recalling from Lemma \ref{lem:SigmaTchar} that $\Sigma$ is
normal.  Write $\sub_A(B)$ for the subgroups of $B$
containing $A$.
  \begin{lemma}
\label{lem:subHsubHb}
   Passing to quotients mod $\Sigma$ induces a homeomorphism 
  $$\sub_{\Sigma}(H)/H \cong \sub(\Hb)/\Hb$$
\end{lemma}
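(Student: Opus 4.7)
The plan is to use the correspondence theorem for normal subgroups, together with standard continuity properties of the Hausdorff metric under the quotient map of compact Lie groups $p\colon H\twoheadrightarrow \Hb$.

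First I would set up the bijection at the level of sets. Since $\Sigma$ is normal in $H$ by Lemma \ref{lem:SigmaTchar}, the projection $p$ induces
$$\phi\colon \sub_{\Sigma}(H)\lraiso \sub(\Hb), \qquad K\mapsto p(K) = K/\Sigma,$$
with inverse $\psi(\Kb) = p^{-1}(\Kb)$; the image under $\psi$ automatically contains $\ker p = \Sigma$, and $\phi\psi$, $\psi\phi$ are both the identity by the correspondence theorem. The map $\phi$ intertwines conjugation, $\phi(hKh^{-1}) = p(h)\phi(K)p(h)^{-1}$, and since $\Sigma = \ker p$ acts trivially on $\sub(\Hb)$, the $H$-action on $\sub(\Hb)$ factors through $\Hb$. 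Hence $\phi$ descends to a bijection of orbit spaces $\sub_{\Sigma}(H)/H\cong \sub(\Hb)/\Hb$.

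Second, I would verify that $\phi$ and $\psi$ are continuous in the h-topology. Continuity of $\phi$ is immediate from the uniform continuity of $p$ on the compact group $H$: if $d_H(K,K')<\delta$ then $d_H(p(K),p(K'))$ is controlled by the modulus of continuity of $p$. For $\psi$, if $\Kb_i\to \Kb$ then $p^{-1}(\Kb_i)\to p^{-1}(\Kb)$, because $p^{-1}(-)$ is obtained by saturating under the compact subgroup $\Sigma$, a continuous operation on closed subsets of $H$. Both spaces are compact Hausdorff (Stone spaces), so a continuous bijection is automatically a homeomorphism, and the same then holds on $H$-orbit spaces.

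Finally, to upgrade to a Zariski homeomorphism, I check that the bijection preserves cotoral specialization. If $\Sigma\subseteq L\subseteq K\subseteq H$ with $L$ normal in $K$ and $K/L$ a torus, then $L/\Sigma$ is normal in $K/\Sigma$ with quotient $K/L$ a torus; conversely, preimages under $p$ preserve normality and the quotient. Since the Zariski topology is determined by the h-topology refined by closure under cotoral specialization, the h-homeomorphism is automatically a Zariski homeomorphism. The only step requiring minor care is the continuity of $\psi$ in the Hausdorff metric, but this is a routine consequence of $p$ being an open quotient map with compact fibres.
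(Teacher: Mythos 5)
Your proof is correct and takes essentially the same approach as the paper: the correspondence theorem gives the bijection, one checks it is an h-homeomorphism preserving cotoral inclusion, and then passes to conjugacy orbits. The paper's proof is simply a terser version of the same argument, leaving the continuity and cotoral-preservation checks implicit.
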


\begin{proof}
Certainly there is a bijection $\sub_{\Sigma}(H)\cong \sub 
(\Hb)$, which is a homeomorphism for the Hausdorff metric topology. It 
also preserves cotoral inclusion. Passing to quotients mod $H$ we get 
a homeomorphism as stated.  
\end{proof}

Subgroups close to $H$ (within $\delta$ say), contain $\Sigma$, and therefore we
have natural homeomorphisms
$$\sub^{\delta}(H)/H=\sub^{\delta}_{\Sigma}(H)/H\cong \sub^{\delta}(\Hb)/\Hb, $$
and we have essentially reduced to toral groups as discussed in
\cite{t2wqalg}.

\subsection{Toral groups}
  In the toral case $H/\Sigma=\Hb$ we have the extension 
  $$1\lra \Tb\lra \Hb \lra W\lra 1$$
  where $W=\pi_0(\Hb)=\pi_0(H)=H_d$. 
  The idea is that any subgroup $\Kb\subseteq \Hb$ determines
  $$\Sb=\Kb\cap \Tb\in W\!-\!\sub(\Tb)\cong W\!-\!\sub(\Tb^*). $$
    We have an exact sequence
  $$\xymatrix{
    H^2(W;\Sb)\rto&  H^2(W; \Tb)\rto &H^2(W; \Tb/\Sb).
  }$$
  A subgroup $K$ with $\Kb\cap \Tb=\Sb$ exists if and only if the extension
  class $\eps (\Hb)\in H^2(W; \Tb)$ lifts to $H^2(W;\Sb)$, which happens 
  provided $\eps (\Hb)$ is annihilated by the map $\Tb\lra \Tb/\Sb$.  
  Since
  $H^2(W; \Tb)\cong H^3(W; \Lambda_0)$ is a finite group it has an
  exponent $n$ and $\eps(\Hb)$ lifts whenever $\Sb$ contains $T[n]$.
  Accordingly, 
$$\sub(\Hb)/\Hb\lra W\!-\! \sub(\Tb)$$
  is almost surjective, and we may indicate the image by naming the
  class $\eps(\Hb)$ that needs to lift.

  Finally, it is shown in \cite{t2wqalg} that $\Hb$-conjugacy classes
  of lifts are in bijection to $H^1(W; \Tb/\Sb)$.

 In other words, if we choose a small enough neighbourhood of $H$ that 
subgroups all contain $\Sigma$ then we have a Zariski-clopen 
neighbourhood of $H$, which is a space we understand completely as 
the bottom row in the diagram below.

\begin{cor}
  The map 
  $$\sub(H)/H \lra W\!-\!\sub_Z(T)$$
  is almost surjective, in the sense that the image consists of the subgroups 
  $S$ that support $\eps (\Hb)$, and the image contains all
  $W$-invariant subgroups containing $T[n]$ (where $n$ is the order
  of $\eps (\Hb)$). The map has finite fibre  $H^1(W; 
  T/S)$  over a $W$-invariant subgroup $S$. 
\end{cor}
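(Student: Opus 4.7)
The Corollary is essentially a summary of the discussion immediately preceding it, so the plan is to verify each clause in turn and supply the one ingredient not written out explicitly, namely the sufficient condition involving $T[n]$. The first step is to invoke Lemma \ref{lem:subHsubHb} to pass to the toral quotient $\Hb=H/\Sigma$, identifying $\sub_{\Sigma}(H)/H$ with $\sub(\Hb)/\Hb$ and $W\!-\!\sub_Z(T)$ with $W\!-\!\sub(\Tb)$. The map of the Corollary then becomes $(\Kb)\mapsto \Kb\cap \Tb$, and a $W$-invariant $\Sb\leq \Tb$ lies in the image precisely when there is some $\Kb\leq\Hb$ with $\Kb\cap\Tb=\Sb$. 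As already recorded in the excerpt, this is the requirement that the extension $1\to\Sb\to\Kb\to W\to 1$ sits inside $1\to\Tb\to\Hb\to W\to 1$, equivalently that $\eps(\Hb)\in H^2(W;\Tb)$ lifts along $H^2(W;\Sb)\to H^2(W;\Tb)$, which by the displayed six-term sequence is exactly the condition that $\eps(\Hb)$ maps to zero in $H^2(W;\Tb/\Sb)$. This already gives the characterisation of the image as those $\Sb$ that support $\eps(\Hb)$.

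The only clause requiring genuinely new input is the claim that every $W$-invariant $\Sb$ with $\Sb\supseteq T[n]$ lies in the image. Here I would exploit divisibility of $\Tb$: from the short exact sequence of $W$-modules
$$0\to T[n]\to \Tb\stacklra{\cdot n}\Tb\to 0$$
one extracts the exact segment $H^2(W;T[n])\to H^2(W;\Tb)\stacklra{\cdot n} H^2(W;\Tb)$. Since $n\cdot \eps(\Hb)=0$ by definition of $n$, the class $\eps(\Hb)$ lifts to some class in $H^2(W;T[n])$; the inclusion $T[n]\hookrightarrow \Sb$ then pushes this further to a class in $H^2(W;\Sb)$ mapping to $\eps(\Hb)$, and so $\Sb$ supports $\eps(\Hb)$ as required.

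For the fibre assertion I would simply appeal to the computation of \cite{t2wqalg} already quoted just above the Corollary: $\Hb$-conjugacy classes of lifts of a fixed $\Sb$ are parameterised by $H^1(W;\Tb/\Sb)=H^1(W;T/S)$, which is finite because $W$ is a finite group and $\Tb/\Sb$ is a compact abelian Lie group. The main obstacle, modest as it is, is keeping careful track that the cohomology lift really yields a genuine subgroup $\Kb$ (not just a 2-cocycle) and that the $H^1$ parameterisation matches $\Hb$-conjugacy of subgroups rather than merely $\Tb$-conjugacy of sections; both points are dispatched by the explicit splitting description recalled in \cite{t2wqalg}.
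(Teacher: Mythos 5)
Your proposal is correct and follows essentially the same route as the paper: the Corollary carries no separate proof in the paper because it is a restatement of the paragraph immediately preceding it, and the one step left unwritten there---why $\Sb\supseteq T[n]$ suffices---is exactly what you supply, via the multiplication-by-$n$ short exact sequence $0\to T[n]\to \Tb\to \Tb\to 0$. The image/kernel bookkeeping is sound (lifting to $H^2(W;T[n])$ and pushing forward along $T[n]\hookrightarrow \Sb$ is equivalent to the paper's implicit observation that $\Tb\to\Tb/\Sb$ factors through multiplication by $n$), and the fibre clause is correctly attributed to the cited computation in \cite{t2wqalg}.
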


Altogether, the situation can be summarised in the diagram
$$\xymatrix{
    &&\sub^{\delta}(H)/H\ar@{=}[d]&&\\
    0\rto &H^2(W; \Lambda_{\Sb})\rto \dto^{=}& \sub^{\delta}(\Hb)/\Hb\rto^(0.46){\tau} 
    \dto &
    W\!-\!\sub^{\delta}_{\eps(\Hb)}(\Tb)\rto \dto & 0\\
    0\rto &H^2(W; \Lambda_{\Sb})\rto & \sub(\Hb)/\Hb \rto^(0.46){\tau} &
    W\!-\!\sub_{\eps(\Hb)}(\Tb)\rto & 0 
 }$$

\subsection{The earth beneath}
Because of the role of cotoral inclusions in the Zariski topology, we
wish to separate the central torus $T_1$ from the moving torus
$T_f$. This is formalized by mapping to a suitable product. 

\begin{prop}
There is a continuous function 
$$\lambda : \cV^G_H\lra \sub(T_1)_G\times \cN^G_H$$
defined by 
$$\lambda (K):=(\tau (K), \omega (K))$$
where $\omega$ is as in Lemma \ref{lem:omegacts} and $\tau (K)=K\cap T_1$
with $T_1$ is the central torus of $H$. This is almost surjective and and it has finite 
fibres of bounded order. 
\end{prop}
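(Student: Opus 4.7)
The plan is to verify the proposition in four steps: well-definedness on $G$-conjugacy classes, continuity of both components, almost surjectivity, and a uniform bound on fibre size.

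First, for well-definedness, the assignment $K \mapsto \omega(K)$ descends to $G$-conjugacy classes (by the obvious $G$-analogue of Lemma \ref{lem:omegacts}) and lands in $\cN^G_H \subseteq \Phi G$ by construction of $\cV^G_H = \Lambda_{ct}(\cN^G_H)$, since every class in $\cV^G_H$ has a unique cotoral ancestor in $\cN^G_H$. The intersection $K \cap T_1$ depends on representatives of the class, but simultaneous $G$-conjugation sends $(K, T_1)$ to $(gKg^{-1}, gT_1g^{-1})$, so on $G$-orbits we obtain a well-defined element of $\sub(T_1)_G$. For continuity, $\omega$ is continuous by Lemma \ref{lem:omegacts}, and for $\tau$ I would first shrink $\cN^G_H$ so that every representative of every class in $\cV^G_H$ contains the characteristic subgroup $\Sigma$ (possible by Lemma \ref{lem:SigmaTchar}). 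Then $\tau$ factors through $\sub(\Hb)/\Hb$ via Lemma \ref{lem:subHsubHb}, and using the classification of Section \ref{sec:flattening}, the toral part $\Sb = \Kb \cap \Tb$, and hence its intersection $\Sb \cap T_1$ with $T_1 \leq \Tb$, is determined by $\omega(K)$ up to locally constant $H^1$-data. Continuity of $\tau$ thus reduces to continuity of $\omega$ into the Stone space $\cN^G_H$.

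Next, for almost surjectivity I would invoke the extension-class analysis of Section \ref{sec:flattening}. Given a target $(S, H') \in \sub(T_1)_G \times \cN^G_H$, a preimage is a subgroup $K \leq H'$ cotoral in $H'$ with $K \cap T_1 = S$. Working modulo the semisimple factor $\Sigma'$ of $H'_e$, such a $K$ exists iff the extension class $\eps(\Hb') \in H^2(W'; \Tb')$ lifts to $H^2(W'; \Sb)$ for some $W'$-invariant $\Sb \leq \Tb'$ with $\Sb \cap T_1 = S$. Since $\eps(\Hb')$ is torsion of some order $n$, every $\Sb$ containing $\Tb'[n]$ supports it, so lifts exist for all but a bounded exceptional set of targets in each $\cN^G_H$-slice, which is precisely what almost surjectivity requires.

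Finally, for the fibre bound, the preimage of a supported $(S, H')$ is classified up to $\Hb'$-conjugacy by $H^1(W'; \Tb'/\Sb)$, a finite abelian group whose order is bounded in terms of $|W'|$ and the torsion of $\Tb'$. Passing from $\Hb'$- to $G$-conjugacy only collapses the fibre further, and since $|W'|$ and the relevant torsion are uniformly bounded over $\cN^G_H$ (all determined up to finite error by $H$), we obtain the required uniform bound. I expect the main obstacle to be the continuity of $\tau$, since intersection with a fixed subtorus is not Hausdorff-continuous on arbitrary closed subgroups; the argument must exploit the cotoral rigidity of $\cV^G_H$ together with the total disconnectedness of $\cN^G_H$ to show that $K \cap T_1$ is locally constant along fibres of $\omega$ after sufficient shrinking of the neighbourhood.
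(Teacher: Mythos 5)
Your outline for almost surjectivity and for the finite-fibre bound is consistent with what the paper actually does in Section~\ref{sec:blocktop}: the image is detected by whether $\eps(\Hb')$ lifts to $H^2(W';\Sb)$, which holds once $\Sb$ contains the relevant torsion, and the fibre over a supported pair is counted by $H^1(W';\Tb'/\Sb)$ together with a bounded amount of conjugacy collapsing. That part of the proposal is essentially the paper's argument.

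The continuity step, however, contains a real error. You assert that $\Sb\cap T_1$ ``is determined by $\omega(K)$ up to locally constant $H^1$-data'' and conclude that continuity of $\tau$ reduces to continuity of $\omega$; you repeat this at the end in the form ``$K\cap T_1$ is locally constant along fibres of $\omega$''. Both statements are false, and if either were true the proposition would collapse: $\tau$ and $\omega$ are designed to be (essentially) independent coordinates. Concretely, the fibre of $\omega$ over a fixed $\Khat\in\cN^G_H$ consists precisely of the classes cotoral in $\Khat$ (since $\Khat$ has finite Weyl group, $K\leq_{ct}\Khat$ forces $\omega(K)=\Khat$), and the surjectivity argument you yourself sketch shows that $\tau$ restricted to that single fibre already hits almost all of $\sub(T_1)_G$. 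In the $U(2)$ example worked out in the paper, $\tau$ restricted to the fibre of $\omega$ over any one $K_{2n}$ maps onto the whole poset $\fP$. So the proposed reduction does not go through.

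The route the paper actually envisages for continuity is the factorisation recorded in the splitting diagram of Section~\ref{sec:topology}: after shrinking $\cN^H_H$ so that all subgroups contain $\Sigma$ (Lemma~\ref{lem:subHsubHb}), one passes through $K\mapsto K\cap H_e$ (continuous by the first of the three continuity lemmas) to $\Sb=\Kb\cap\Tb$, and then through the near-product decomposition $\Tb_1\times\Tb_f\lra\Tb$, where $\Sb\mapsto\Sb\cap\Tb_1$ becomes a projection to the first factor up to the bounded ambiguity from the finite kernel $L_f$. Your closing remark — that intersection with a fixed subtorus is not Hausdorff-continuous on arbitrary closed subgroups — is correct and is the genuine subtlety here; but the rescue is the $W$-invariance of $\Sb$ and the sandwich $\Ac\times\Bc\subseteq\tilde{C}\subseteq\Abar\times\Bbar$ from Section~\ref{sec:subgroups}, not any local constancy along fibres of $\omega$. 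Finally, your well-definedness argument for $\tau$ should invoke that $T_1$ is characteristic in $H$ (Lemma~\ref{lem:SigmaTchar}), so $N_G(H)$-conjugation preserves it; as written you conjugate $T_1$ as well as $K$, which lands you in $\sub(G)_G$ rather than $\sub(T_1)_G$.
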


\begin{example} We consider the top block $\cV^H_H$ of a group $H$
  with $H_e=\Sigma\times_Z T$, taken sufficiently small.
 By Lemma \ref{lem:subHsubHb} the space is  essentially independent of
 $\Sigma$,  and homeomorphic to the corresponding space for
 $\Hb=H/\Sigma$. The behaviour is determined by the torus $\Tb=T/Z$.

(a) The trivial case is when $H_e$  is 
semisimple,  $\Tb=1$  and we may take 
$\cN^H_H=\{ H\}$ to be a singleton. It follows that 
$\cV_H^H$ is also a singleton since $H$ has no proper cotoral
subgroups. 

(b) One extreme is  that $H$ has finite centre (which is to say
$H_d$ fixes no circle in $T$). In this case all subgroups in $\cV^H_H$
have finite Weyl group, and hence $\cV^H_H$ is a Stone space (this is
the type of block dealt with in \cite{t2wq}).  

(c) The opposite extreme is when $H_d$ acts trivially on $T$ (so that
$T$ is the identity component of the centre of $H$). In this case $H$ 
is isolated in $\Phi H$ and $\cN^H_H=\{H\}$, and 
$\cV^H_H=\sub(T)_H$ is a Noetherian space, with topology determined 
by the poset of closures of points. 

(d) The general case is the mixed case with $V_1\neq 0$ and $V_f\neq
0$.  An example is worked out in detail in \cite{t2wqmixed}: the group
$H$ is the normalizer  of the maximal torus in $U(2)$. 
In this case $\Sigma$ is trivial, and $T=\Tb$ is the 2-torus with
$H_d$ of  order 2 exchanging the two  factors. The representation $\Lambda_0=H_1(T)$
is the regular representation $\Z [H_d]$. Its rationalization splits
as a sum $\Q[H_d]=\Q\oplus \Qt$.

The fiexed point set $T^{H_d}$ consists of the 
scalar matrices, and in this case it is exactly the centre. 
The space of subgroups 
of the centre is the poset $\fP$ with one 
maximal element $\infty$ (corresponding to the centre $T^{H_d}$
itself) and one minimal element $n$ for each cyclic subgroup of order 
$n\geq 1$. 

We may take the space $\cN^H_H$ to consists of $H$ itself, together
with the subgroups 
$K_{2n}$ containing the centre  and matrices $\diag (a, a^{-1})$ with 
$a^{2n}=1$. Thus $\cN^H_H$ homeomorphic to the one-point compactification of the 
positive integers.

The map 
$$\lambda : \cV^H_H\lra \fP \times \cN^H_H $$
is actually surjective. The fibres over the points $(\infty,n)$ are 
singletons 
The fibres over the points $(m, \infty)$ with $m<\infty$  are pairs. 
The fibres over the points $(m, n)$ with $m, n<\infty$  have two or 
three elements. There are three elements precisely when $m+n, m-n$ are 
both even.  
\end{example}

  \subsection{From $H$-conjugacy to $G$-conjugacy}
We have explained that the block $\cV^G_H$ is constructed by finding a 
neighbourhood $\cN^H_H$ of $H$ in $\Phi H$ small enough that subgroups also have 
finite $G$-normalizer and then taking the closure under cotoral 
specialization. If we just specify $\cN^H_H$ as a metric ball of 
radius $\delta$ this gives 
$$\xymatrix{
  \sub_f^{\delta}(H)/H\ar@{=}[r]&
  \cN^H_H\rto^{\pi} \dto &\cN^G_H\dto\\
\sub^{\delta}(H)/H\ar@{=}[r]&  \Lambda_{ct}\cN^H_H\rto^{\pi}\ar@{=}[d] &\Lambda_{ct}\cN^G_H\ar@{=}[d] \\
  &\cV^H_H\rto &\cV^G_H 
}$$

\begin{lemma} The maps labelled $\pi$ are surjective and have finite fibres of 
  bounded size. They are also both continuous and closed. 
\end{lemma}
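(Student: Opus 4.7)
The plan is to verify the four asserted properties (surjectivity, continuity, closedness, finite bounded fibres) of each of the two $\pi$'s, with the main work concentrated in the fibre bound.

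The three routine properties are straightforward. Continuity comes from the Hausdorff-continuous, cotoral-preserving inclusion $\sub(H) \hookrightarrow \sub(G)$ after passage to the respective conjugation quotients. Surjectivity of the top map is the definition of $\cN^G_H = \pi(\cN^H_H)$; surjectivity of the bottom map follows by lifting a cotoral overgroup from $G$ to $H$ via the top surjectivity and then $G$-conjugating the cotoral subgroup into $H$. Closedness uses $h$-compactness of the source (noting $\cN^H_H$ is clopen in the Stone space $\Phi H$, and $\Lambda_{ct}\cN^H_H$ is $h$-closed in $\sub(H)/H$), combined with Hausdorffness of the target; Zariski-closedness then follows because images of cotoral-closed sets are cotoral-closed (by realising cotoral specializations in $\sub(H)$ via $G$-conjugation).

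The main content is the fibre bound. For the top $\pi$ over $(K)_G \in \cN^G_H$ with representative $K \le H$, the fibre is naturally bijective with $H \backslash S / N_G(K)$, where $S = \{g \in G : g K g^{-1} \in \cN^H_H\}$. By upper semicontinuity of the normaliser (Theorem \ref{thm:normalizersusc}), after possibly shrinking $\cN^H_H$, one has $N_G(K') \le L := N_G(H)$ for every $K' \in \cN^H_H$. In particular for $g \in S$, both $N_G(K)$ and $g N_G(K) g^{-1} = N_G(g K g^{-1})$ lie in $L$. The key claim is then $S \subseteq L \cdot N_G(K)$, which exhibits the fibre as a quotient of $H \backslash L = W_G(H)$ and so bounds it by $|W_G(H)| < \infty$, uniformly in $(K)_G$. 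For the bottom $\pi$, the argument is parallel with $H$ replaced by $\omega(K) \in \cN^G_H$; uniform finiteness of the bound follows from compactness of $\cN^G_H$ together with the continuity of $\omega$ (Lemma \ref{lem:omegacts}) and upper semicontinuity of $|W_G(\cdot)|$ on $\Phi G$.

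The principal obstacle is the containment $S \subseteq L \cdot N_G(K)$, i.e., upgrading a $G$-fusion of subgroups near $H$ to an $N_G(H)$-fusion. The approach is local: for $g \in S$, the intersections $K \cap H_e$ and $g K g^{-1} \cap H_e$ are both close to $H_e$ in $\sub(H_e)$ and, by continuity of the intersection functor and compactness, are $H_e$-conjugate; after adjusting $g$ by a suitable element of $N_G(K)$ and an element of $H_e$, we may then assume $g$ normalises a common near-$H_e$ subgroup, and a final adjustment using the finiteness of $W_G(H) = L/H$ brings $g$ into $L$. This is the technical heart of the argument and the step requiring the most care.
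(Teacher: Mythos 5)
The three routine items (continuity, closedness, surjectivity) are handled correctly and essentially as in the paper: continuity from the inclusion $\sub(H)\hookrightarrow\sub(G)$ and the quotient topology, closedness because all spaces are compact Hausdorff, surjectivity by construction and a cotoral lift.

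Where you diverge is on the fibre bound, and your route is genuinely different from the paper's. The paper factors the map $\pi:\cV^H_H\lra\cV^G_H$ through the commutative square
$$\xymatrix{\cV^H_H\rto^(0.3){\lambda}\dto & \sub(T)_H\times\cN^H_H\dto\\ \cV^G_H\rto^(0.3){\lambda}&\sub(T)_G\times\cN^G_H}$$
and bounds the left vertical by combining the finite, uniformly bounded fibres of the horizontal $\lambda$ maps (obtained from the ramification analysis of the top block in Section~\ref{sec:flattening}) with a Weyl-group bound on the right vertical. You instead attempt a direct fusion argument: express the fibre as a double coset space $H\backslash S/N_G(K)$ and try to show $S\subseteq L\cdot N_G(K)$ with $L=N_G(H)$. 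Notice, however, that once you have invoked Theorem~\ref{thm:normalizersusc} to get $N_G(K)\subseteq L$, the set $L\cdot N_G(K)$ collapses to $L$ itself, so your key claim is exactly the (stronger, cleaner) assertion $S\subseteq N_G(H)$: every $g$ with $K^g\in\cN^H_H$ must normalize $H$.

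This is a plausible and natural statement, but your sketch does not establish it, and this is a genuine gap. The step ``$K\cap H_e$ and $gKg^{-1}\cap H_e$ are both close to $H_e$ and, by continuity of the intersection functor and compactness, are $H_e$-conjugate'' does not follow: continuity produces no conjugating element, and two subgroups close to $H_e$ need not be $H_e$-conjugate (distinct $T[m]$'s in a torus are all close to $T$ and never conjugate to each other). The subsequent claim that adjusting $g$ by elements of $N_G(K)$ and $H_e$ makes it normalize a common near-$H_e$ subgroup, and the ``final adjustment'', are left without any mechanism; you flag this as the technical heart, but the proof does not get it off the ground. A more tractable beginning, consistent with the remark in the paper following this lemma, is to note that for $\cN^H_H$ small enough both $K$ and $K^g$ contain the semisimple part $\Sigma$ of $H_e$, that any semisimple subgroup of $H_e$ lies inside $\Sigma$, and hence that $g\in N_G(\Sigma)$, reducing to the toral quotient — but even then you do not immediately land in $N_G(H)$, and the hard part remains. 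You should either prove $S\subseteq N_G(H)$ carefully (which would be a nice, self-contained argument) or switch to the paper's factorization through $\lambda$, which outsources the difficulty to the ramification analysis already carried out.
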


\begin{proof}
The maps $\pi$ are induced from the inclusion $\sub(H)\lra \sub(G)$ by 
passing to conjugacy, and are therefore continuous by the universal property of the quotient 
topology. Since all spaces are compact and Hausdorff, closed sets 
coincide with compact sets, so continuous maps are closed. 

The diagram 
$$\xymatrix{
  \cV^H_H\rto^(0.3){\lambda}\dto &\sub(T)_H \times \cN^H_H\dto \\
  \cV^G_H\rto^(0.3){\lambda}  &\sub (T)_G \times \cN^G_H 
  }$$
commutes since the subgroups represented in $\cN^H_H$ also have finite
$G$-Weyl groups by Lemma \ref{lem:PhiHPhiG}. The fibres of the top row are 
bounded from our analysis of the top block $\cV^H_H$, and the fibre of 
the left hand vertical is bounded by the order of the Weyl group of $G$. 
  \end{proof}

\begin{remark}
We may wish to be more precise. The map $\cN^H_H\lra 
\cN^G_H$ is surjective by definition, so we 
just need to ask when it happens that $(K)_G=(K')_G$ but $(K)_H\neq 
(K')_H$. We suppose $K'=K^g$. 

We suppose that $K$ is a full subgroup of $H$. There seem to be four 
different levels of conjugacy, presumably getting less and less rare. 

\begin{itemize}
\item $K^g$ is not full. In other words $K^g$ does not represent all 
  components of $H$. This means  $g$ conjugates some elements from 
  non-identity components into $\T$. This means that $d(K,K^g)$ is at 
  least the distance 
  between two components of $H$, and hence there is a neighbourhood of 
  $e$ not containing any element $g$.  To see this happens we need 
  only consider the fact that the subgroups  $D_2$ (generated by a 
  reflection) and $C_2$ (generated by a rotation) in $O(2)$ become 
  conjugate in $SO(3)$. 
\item $K^g$ is full but $S'=K^g\cap \T\neq K\cap \T$. 

  \item $K^g$ is full and $S=S'$ but $g\not \in N_G(H)$
\item $K^g$ is full and $S=S'$ and $g \in N_G(H)$. This means that $g$
  acts on $H^1(W; \T/S)$
  \end{itemize}
\end{remark}

\section{Standard neighbourhoods}
\label{sec:blocktop}
The purpose of this section is to relate the topology on $\cV^G_H$ to
group theory by describing a base of  neighbourhoods determined by
product groups. We introduce notation but there are no surprises. 

\subsection{Standard neighbourhoods}

We will describe a topological basis of standard neighbourhoods $\cN^H_H$ of $H$ in
$\Phi H$. We start with special cases and then increase the
complexity.

  First consider the case $H=T$.    For  $1\leq A \leq \infty$, we may 
  consider the set    $\cN_T^{all} (A)$ of subgroups 
 containing a subgroup $T[a]$ with $a\geq 
 A$, with the convention that $T[\infty]=T$,  and 
recognize that not every subgroup is of the form $T[a]$ (the
 superscript `all' reflects the fact we are not restricting to subgroups with
 finite Weyl group).  The sets $\cN_T^{all}(A)$ give a base of
 neighbourhoods of $T$ inside $\sub (T)$.

 Next we suppose $H_e$ has trivial semisimple component, so $H_e=T$,
 but take into account the fact that  $H_d$ acts on $T$ 
 and we have a decomposition of a
 finite cover of $T$ into a product, giving a finite cover
 $$ \Tt_1\times \cdots \times \Tt_s \lra T, $$
 with kernel $L$. We write
  $$\Tt[a_1, \ldots , a_s]=\Tt_1[a_1]\times \cdots \times \Tt_s[a_s], $$
and  $T [a_1, \ldots , a_s]$ for the image in $T$.  If  $1\leq A_i\leq \infty$, we may 
  consider the set    $\cN_H^{all} (A_1, \ldots, A_s)$ of subgroups
  containing a subgroup $T[a_1, \ldots , a_s]$ with $a_i\geq  A_i$.
These subgroups $T[a_1, \ldots ,a_s]$ give a base of neighbourhoods of $T$ inside $\sub (T)$. If we wish to restrict to subgroups with finite
Weyl group we need to restrict to those with $a_1=\infty$, and we take
$$\cN_H( A_2, \ldots , A_s)=\cN_H^{all}(\infty, A_2,
\ldots , A_s)\subseteq  \Phi H. $$

Finally, in the general case that the semisimple component $\Sigma$  of $H_e$ is non-trivial we take
$$\cN_H( A_2, \ldots , A_s)=\{ K \st K_{\e}=\Sigma \times_Z S \mbox{
  with }
Z_T+T[\infty, a_2, \ldots ,a_s]\subseteq S \mbox{ for some } a_i\geq A_i\}.$$

  \begin{lemma}
The neighbourhoods $\cN_H^{all} (A_1, \ldots , A_s)$ give a base of 
neighbourhoods of $H$ in the Hausdorff metric topology on $\sub (H)/H$.

The neighbourhoods $\cN_H (A_2, \ldots , A_s)$ give a base of 
neighbourhoods of $H$ in $\Phi H$.
\end{lemma}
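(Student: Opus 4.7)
The plan is to convert between ``$K$ is $\epsilon$-close to $H$ in the Hausdorff metric'' and ``$K$ contains $T[a_1,\ldots,a_s]$ with large $a_i$''. The structural preliminary (essentially the lemma of Section~\ref{sec:subgroups}) is that for $K$ sufficiently Hausdorff-close to $H$, the subgroup $K$ is full and $K_{\e} = \Sigma \times_Z S$ for some closed subgroup $S \subseteq T$. Fullness follows because the components of $H$ lie at positive distance from each other, so a close $K$ must meet every component; the form $\Sigma \times_Z S$ then follows from the continuity of $K \mapsto K_{\e}$ combined with the rigidity of semisimple subgroups under small perturbation (there are only finitely many conjugacy classes of semisimple subgroups of $H_e$ of any fixed dimension, so $\Sigma$ is isolated among them). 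Given this reduction, the Hausdorff distance $d(K, H)$ is controlled by $d(S, T)$ in $\sub(T)$, since $\Sigma$, $Z$, and the coset structure dictated by fullness are all fixed.

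It therefore suffices to show that $\{S \subseteq T : S \supseteq T[a_1,\ldots,a_s] \text{ for some } a_i \geq A_i\}$ forms a base of neighbourhoods of $T$ in $\sub(T)$. For this I would use the isogeny $\Tt = \Tt_1 \times \cdots \times \Tt_s \to T$. In one direction, if $S$ contains $T[a_1,\ldots,a_s]$ with $a_i$ large, the mesh of $T[a_1,\ldots,a_s]$ in $T$ is bounded by $c\cdot\max_i(1/a_i)$ for some metric constant $c$, so every point of $T$ lies close to $S$, bounding $d(S,T)$. Conversely, if $d(S,T) < \epsilon$, then $S$ has a point within $\epsilon$ of every element of $T[a_1,\ldots,a_s]$ for $a_i$ tending to infinity as $\epsilon \to 0$; since $S$ is a closed subgroup and the torsion points are isolated once $\epsilon$ is small, one concludes $S \supseteq T[a_1,\ldots,a_s]$ itself.

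For the $\Phi H$ statement, $\Phi H$ carries the subspace topology from $\sub(H)/H$, so restricting the Hausdorff-metric base automatically gives a base for $\Phi H$. By the structural lemma of Section~\ref{sec:subgroups}, a subgroup $K$ near $H$ with finite $H$-Weyl group must satisfy $T_1 \subseteq K$, which is precisely the condition $a_1 = \infty$; hence $\cN_H(A_2,\ldots,A_s) = \cN_H^{all}(\infty, A_2,\ldots,A_s) \cap \Phi H$ inherits the base property.

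The main obstacle is handling the finite kernel $L$ of the cover $\Tt \to T$: preimages of subgroups of $T$ need not respect the product structure, which complicates the correspondence between subgroups of $T$ containing a given $T[a_1,\ldots,a_s]$ and product subgroups of $\Tt$. Since the $A_i$ are free to be chosen arbitrarily large, however, any finite-index ambiguity introduced by $L$ is absorbed into the slack $a_i \geq A_i$, and both the approximation estimates and the base property survive.
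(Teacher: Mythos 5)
Your overall strategy follows the paper's: reduce (via fullness and continuity of $K\mapsto K_{\e}$) to closed subgroups of the maximal torus and argue with the isogeny $\Tt\to T$, estimating Hausdorff distance in both directions. The forward direction (large $a_i$ implies small Hausdorff distance via a mesh bound) is fine and matches the paper.

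The converse direction is where the gap lies, and it is a genuine one. You argue: ``$S$ has a point within $\epsilon$ of every element of $T[a_1,\ldots,a_s]$; since $S$ is a closed subgroup and the torsion points are isolated once $\epsilon$ is small, one concludes $S\supseteq T[a_1,\ldots,a_s]$.'' But a closed subgroup having a point $\epsilon$-close to a torsion point $t$ does not force $t\in S$; the nearby point of $S$ need not be torsion at all, so ``isolation'' of torsion points buys nothing. To see the problem concretely, take $\Tt_i$ of rank two, say $\Tt_i=(\R/\Z)^2$, and $S=\{(t,nt):t\in\R/\Z\}$. Then $d_H(S,\Tt_i)\leq 1/(2n)$, which is as small as you like, yet $S\cap \Tt_i[a]$ is a proper subgroup of $\Tt_i[a]$ for every $a\geq 2$. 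So the step ``close implies contains a full torsion subgroup'' is simply not a consequence of the Hausdorff estimate you wrote down. The paper's own proof of this direction (``$\Gamma\not\supseteq\Tt[a_1]\times\cdots\times\Tt[a_s]$ means $d(\Gamma,\Tt)>1/(2\max a_i)$'') is equally terse and implicitly treats each $\Tt_i$ as though it were a circle; what is actually used downstream is that the subgroups arising as $K\cap H_e$ for full $K$ near $H$ take the constrained form $\Sigma\times_ZS$ with $S$ sitting over a specified finite cover, so the honest statement should be restricted to such subgroups rather than all of $\sub(T)$. Your write-up should either impose that restriction explicitly or give an argument that genuinely addresses arbitrary closed subgroups of a higher-rank torus.

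Two smaller remarks. First, for the $\Phi H$ statement you invoke ``subspace topology'' directly, whereas the paper invokes upper semicontinuity of normalizers; your route is fine once you've established that $\cN_H(A_2,\ldots,A_s)=\cN_H^{all}(\infty,A_2,\ldots,A_s)\cap\Phi H$, which you do. Second, your treatment of the kernel $L$ at the end (``any finite-index ambiguity is absorbed into the slack $a_i\geq A_i$'') is heuristically the same move the paper makes by declaring $\Tt\to T$ a local isometry; either phrasing would need a line more to be airtight, but neither is the source of the real gap.
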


\begin{proof} The second statement follows from the first by the
  calculation of normalizers, so we discuss the first. 

First, provided the distances are less than the distance between
components of $H$, $d(A,B)=d(A_{\e}, B_{\e})$ and we may restrict
attention to identity components. Next, 
we note that by a suitable choice of metric $d(\Sigma\times A,
\Sigma\times B)=d(A,B)$, and the quotient map $\Sigma \times T\lra
\Sigma\times_ZT$ is a local isometry. This means it suffices to assume
$H_e=T$. Again, the map $\Tt \lra T$ is a local isometry so we may
argue in $\Tt$.

Now scale so that the distance from a circle to a subgroup 
of order $n$ is $1/n$, we see that 
$$1/\max(a_i)\leq d(\Tt[a_1]\times \cdots \times \Tt[a_s],\Tt)\leq
1/\min (a_i). $$

We must show two containments.  First we show that for any $\epsilon >0$, a ball of radius $\epsilon$ around $H$ contains one of the $\cN_H(A_1, \ldots , 
A_s)$. Indeed, if $a_i\geq 1/\epsilon $ for all $i$, then 
$d(\Tt[a_1] \times \cdots \times \Tt[a_s], \Tt) <\epsilon$. 
Since any subgroup containing
$\Tt[n_1]\times \cdots \times \Tt[n_s])$ is even closer to $H$, 
$B_\epsilon (H)$ contains
the set $\cN_H(A_1, \ldots , A_s)$ if we ensure $A_i>1/\epsilon$ for
all $i$.

In the other direction, $\Gamma \not \supseteq \Tt[a_1]\times \cdots
\times \Tt[a_s]$ means $d(\Gamma, \Tt)>1/2\max(a_i)$,  and hence 
$B_{1/2A}(\Tt)\subseteq \cN_H(A_1, \cdots , A_s)$ where  
 (where $A=\max (A_i)$).
    \end{proof}

\subsection{The map $\lambda$ is almost surjective}
We have shown that a neighbourhood $\cN^H_H$ of $H$ in $\Phi H$
 can be chosen so that its image $\cN^G_H$ lies in $\Phi G$. We now
 observe that we can use one of the standard neighbourhoods and ensure
that $\lambda$ is almost surjective. 

\begin{lemma}
  We may choose $\cN_H^H=\cN_H(A_2, \ldots , A_s)$ and $D$ so that 
  the image of   $\lambda : \cV^G_H\lra \sub(T_1)_G\times \cN^G_H$
contains $\sub_{T_1(D)} (T_1   )_G\times \cN^G_H$. 
  \end{lemma}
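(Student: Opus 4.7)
The plan is, given a pair $(S, [K_0]) \in \sub_{T_1[D]}(T_1)_G \times \cN^G_H$, to produce a subgroup $K\subseteq H$ with $\lambda(K)=(S,[K_0])$. After $G$-conjugation I fix a representative $K_0 \in \cN^H_H = \cN_H(A_2,\ldots,A_s)$, so that $K_0\cap H_e = \Sigma\times_Z C$ with $\tilde C = \Tt_1\times B$ for $B=\tilde C\cap\Tt_f$. Via $\Tt_1 \cong T_1$, let $\tilde S\subseteq\Tt_1$ correspond to $S$, set $\tilde{C}' := \tilde S\times B$, let $C'\subseteq T$ be its image, and define $K_e := \Sigma\times_Z C'$. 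Three routine checks: (a) $C/C' \cong T_1/S$ is a torus, so $K_e$ is cotoral in $(K_0)_e$; (b) $\pi_0(K_0)$ preserves $C'$, acting trivially on $\Tt_1\supseteq\tilde S$ and preserving $B$ because it preserves $\tilde C$; and (c) $C'\cap T_1 = S$, provided $\tilde S\supseteq L_f^1$ (the $\Tt_1$-projection of $L_f$) and $B\supseteq L_f^f$, arranged by taking $D\geq\exp L_f^1$ and $A_i\geq\exp L_f^f$.

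To extend $K_e$ to $K$, I consider the resulting central extension
\[
1 \to T_1/S \to K_0/K_e \to \pi_0(K_0) \to 1
\]
with class $\eta\in H^2(\pi_0(K_0); T_1/S)$; a subgroup $K \subseteq K_0$ with $K\cap (K_0)_e = K_e$ and $\pi_0(K) = \pi_0(K_0)$ is precisely a splitting. Mirroring the almost-surjectivity argument of the earlier corollary for the top block, $\eta$ is the pushforward along $T_1/L_f^1 \to T_1/S$ of the class $\epsilon_0 \in H^2(\pi_0(K_0); T_1/L_f^1)$ of the reference extension obtained by the smallest allowed choice $\tilde S = L_f^1$. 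Since $\pi_0(K_0)\hookrightarrow H_d$ and group cohomology of a finite group has exponent dividing the group order, $\epsilon_0$ has order dividing $|H_d|$. Divisibility of $T_1/L_f^1$ supplies a canonical isomorphism $(T_1/L_f^1)/(T_1/L_f^1)[|H_d|] \cong T_1/L_f^1$ under which the quotient map induces multiplication by $|H_d|$ on $H^2$, killing $\epsilon_0$. Choosing $D$ a multiple of $|H_d|\cdot\exp L_f^1$ guarantees that $S\supseteq T_1[D]$ contains the preimage in $T_1$ of $(T_1/L_f^1)[|H_d|]$, so the projection $T_1/L_f^1\to T_1/S$ factors through this killing quotient, forcing $\eta=0$.

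The splitting then lifts to a closed subgroup $K\subseteq K_0$ with $K\cdot (K_0)_e = K_0$ and $K\cap (K_0)_e = K_e$; consequently $K_0/K \cong T_1/S$ is a torus (so $K$ is cotoral in $K_0$ and $\omega(K)=[K_0]$) and $K\cap T_1 = K_e\cap T_1 = S$, giving $\lambda(K)=(S,[K_0])$. Taking $D$ to be any common multiple of $|H_d|$ and $\exp L_f^1$, and $A_i\geq \exp L_f^f$, handles every pair simultaneously. The main obstacle is the cohomological splitting of the second paragraph: the class $\eta$ a priori depends on $K_0$, but the uniform embedding $\pi_0(K_0)\hookrightarrow H_d$ supplies a uniform exponent bound on $H^2$, and divisibility of $T_1/L_f^1$ converts this into a uniform choice of $D$.
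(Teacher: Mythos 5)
Your proof takes a genuinely different and, in one respect, sharper route than the paper's. The paper writes $K_0=H((\Tt_1\times B)/L_f,\sigma)$ using a fixed splitting $\sigma$, and simply \emph{reuses} $\sigma$ for the candidate $K=H((A\times B)/L_f,\sigma)$, appealing to a finite subgroup $\Ct_{fin}\supseteq L_f$ of exponent $D$ supporting the factor set; the uniformity of $D$ over $\Khat\in\cN^G_H$ is left rather implicit. You instead build the subgroup $K_e\subseteq K_0\cap H_e$ first, reduce the problem to showing the obstruction class $\eta$ of the central extension $1\to T_1/S\to K_0/K_e\to\pi_0(K_0)\to 1$ vanishes, and kill $\eta$ by exhibiting it as a pushforward of a class $\epsilon_0$ lying in $H^2$ of a finite group quotient of $H_d$, hence of order dividing $|H_d|$, followed by a quotient map that is multiplication by $|H_d|$. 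This is an attractive way to make the uniform choice of $D$ explicit, and the construction of $K$ from the splitting is handled correctly (including normality of $K$ in $K_0$, since the split central extension is a direct product).

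There is, however, a genuine gap in the pushforward step. You take the ``smallest allowed choice'' $\tilde S=L_f^1$ as the reference. But for $K_e^{(\tilde S)}=\Sigma\times_Z C'$ with $\tilde C'=\tilde S\times B$ to \emph{be} a subgroup of $H_e=\Sigma\times_Z T$, one needs $C'\supseteq Z_T$ (the image of $Z$ in $T$), equivalently $\tilde Z_T\subseteq\tilde S\times B$, and the $\Tt_1$-component $(\tilde Z_T)^1$ need not lie in $L_f^1$. For instance with $H=U(2)$, $\Sigma=SU(2)$, $T=Z(U(2))\cong S^1$, $Z=\{\pm I\}$, $H_d=1$, one has $L_f=1$ so $L_f^1=1$, yet $(\tilde Z_T)^1=\{\pm 1\}$; so $\tilde S=L_f^1$ is not allowed, there is no reference extension, and $\epsilon_0$ is not defined. (The same constraint $C'\supseteq Z_T$ is also omitted from your check (c), though there it is harmless once $D$ is large.) The fix is easy and preserves your argument: take the reference $\tilde S_0$ to be the $\Tt_1$-projection $(\tilde Z_T)^1$ of $\tilde Z_T$ — a finite subgroup of $\Tt_1$ depending only on $H$, and containing $L_f^1$ — and take $D$ a multiple of $|H_d|\cdot\exp\tilde S_0$.
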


  \begin{proof}
  In a group $H$, with $H_e=\Sigma \times_Z T$, any full subgroup close to 
  $H$ contains $\Sigma$. Similarly, any subgroup $K$ close to $H$ will 
have $N_G(K)\subseteq N_G(H)$. We assume $\cN_H$ is chosen small 
enough in both senses.

Subgroups of $H_e$ correspond to subgroups of 
$\tilde{H}_e:=\Sigma \times T$ containing $Z$, and so any one containing $\SSi$ is of 
the form $\Sigma \times C$ with $C$ containing the image $Z_T$ of $Z$ in $T$. 

To see that $\lambda$ is surjective we need to show that if $\Khat \in 
\cN^G_H$ and $A\subseteq T_1$ then $(\Khat , A)$ is 
in the image. We note that since $\Khat$ has finite Weyl group, 
$\Khat\supseteq T_1$, so that $\Khat_{\e}=\Sigma \times_Z (\Tt_1\times 
B)/L_f$, and $\Khat= H ((\Tt_1\times B)/L_f, \sigma)$ for some splitting 
$\sigma$. We choose a finite subgroup 
$\Ct_{fin}$ with 
$$L_f \subseteq \Ct_{fin} \subseteq  \Tt_1\times \Tt_f$$
supporting the extension. Now provided $\Ct_{fin}\subseteq A\times B$
we  take $K=H((A \times B)/L_f, \sigma))$ and have $\lambda (K)=(\Khat, 
A)$. If we suppose $\Ct_{fin}$ is of exponent $D$ we need only show 
that $A\times B$ contains $T[D]$. 
By requiring $\cN_H$ to be small enough we can ensure that $B$
contains all points of order dividing $D$ in $T_f$, so that we only 
need to ensure that $A$ contains $T_1[D]$.

To see the fibres are finite we note that given $\Khat$, this 
specifies $\Sigma, Z$ and $B$ and we then make a choice of splitting $\sigma$ so 
that $\Khat = H((\Tt_1\times B)/L_f, \sigma)$, and a choice of a 
finite subgroup supporting the extension. There are only finitely many 
subgroups $C^*\subseteq (\Tt_1 \times B)/L_f$ so that $C^*\cap T_1=A$
with $K=H(C^*, \sigma)$. However it is possible that there are other 
splittings $\sigma'$ so that $K'=H(C^*, \sigma')$ has the same image 
under $\lambda$ the finiteness in number comes from the fact that the 
extension is supported on a finite subgroup. Accordingly 
$\sigma$ can be chosen to map into a finite subgroup, and by 
finiteness of $H_d$ there are finitely many choices. 
\end{proof}

\section{The topology of the top block from the toral lattice}
\label{sec:topology}
We are now equipped to describe the topology on the top block $\cV^H_H$ in
terms of homological data.  For other blocks $\cV^G_H$, the topology
is given by the quotient map $\cV^H_H\lra \cV^G_H$. 

\subsection{Flattening}
We start with a subgroup $H$, and assume the neighbourhood $\cN^H_H$
is small enough to ensure all subgroups $K$ contain $\Sigma$. It
follows that the same is true for $\cV^H_H$ and hence $\cV^H_H\cong
\cV^{\Hb}_{\Hb}$. Thus we are effectively reduced to the  toral group
$\Hb$, and we will write $\Tb$ for the identity component.

The basic data is the map 
$$\nu: \cV^H_H\lra W\!-\!\sub(T)_H\cong W\!-\! \sub(\Tb)_H, $$
taking $K$ to $\Sb=(K/\Sigma) \cap \Tb$. Associated to each subgroup $K$ we also have 
the $\Z W$-module $\mu (K)=\Lambda_{\nu (K)}=H_1(T/\nu(K))$.

To specify the topology we need to be able to tell when sequences 
converge. We therefore suppose given full subgroups $K_1, K_2, \ldots, 
K_*$ of $H$ and ask if $K_i\lra K_*$.  Since only finitely many lattices
occur as submodules $\Lambda^{\Sb}$ of $\Lambda^0$ we may suppose that up to
isomorphism $\Z W$-module $\mu(K_i)$ is independent of $i$.
Associating $K/\Sigma$ to $K$ is continuous, and since $\Tb$ is
characteristic, it follows that if $K\lra K_i$ then  $\Sb_i\lra \Sb_*$.
If $\Sb_*$ is the limit, omitting initial terms if necessary, by
Montgomerry-Zippin we may suppose 
$\Sb_i\subseteq \Sb_*$, so that there are induced maps 
$$\alpha^i_*: H^2(W; \Sb_i)\lra 
H^2(W; \Sb_*) \mbox{ and } \beta^i_*: H^1(W; \Tb/\Sb_i)\lra H^1(W; 
\Tb/\Sb_*). $$
Since $K_i\subseteq  K_*$ we have $\alpha^i_*(\eps (K_i))=\eps (K_*)
$. 

\subsection{Ramification and degeneration}
We describe the ramification sequence of \cite[Section 3]{t2wqalg}
for $K_i$; the case of $K_*$ is precisely similar. The $H$-conjugacy classes
of subgroups $K_i$ associated to $\Sb_i$ and a fixed extension class
$\eps_i\in H^2(W; \Sb_i)$ lifting $\eps (H)$ are parametrised by
$H^1(W; \Tb/\Sb_i)$ (note that this parametrises $H$-conjugacy classes
and not extension classes).  More precisely,
once we have chosen a splitting $\sigma_i: W\lra \Tb$ whose factor set
takes values in $\Sb_i$ and represents $\eps_i$, all other splittings  are $\sigma_i \cdot g$
for a function $g: W\lra \Tb$ which give an element of $Z^1(W;
\Tb/\Sb_i)$.  The subgroups  are
$H$-conjugate if the cycles  differ by a boundary. For $x\in H^1(W;
\Tb/\Sb_i)$, we  will write $\kappa_i (x)$ for the conjugacy class
represented by $\sigma_i \cdot g$, where $g$ is a cocycle
representative for $x$.  

We have a  diagram
$$\xymatrix{
H^1(W; \Tb/\Sb_i)\rto \dto^{\beta^i_*} &H^2(W; \Sb_i)\rto \dto^{\alpha^i_*} &H^2(W;\Tb)\dto^= \\
H^1(W; \Tb/\Sb_*)\rto  &H^2(W; \Sb_*)\rto &H^2(W;\Tb) 
}$$
The extension classes $\eps (G), \eps_i, \eps_*$ are mapped to
each other. Choose a splitting $\sigma_i$ for $K_i$, which also gives
a splitting for $K_*$ and the same formula applies for
$\kappa_i $ for the two groups $K_i$ and $K_*$. This shows 
that if $K_i$ is is represented by $\kappa_i(x)$ in $H^1(W; \Tb/\Sb_i)$
 then $K_*$ is represented by $\kappa_i(\beta^i_*([x]))$ in $H^1(W;
 \Tb/\Sb_*)$, and the degeneration is given by $\beta^i_*$.
$$\xymatrix{
x^{\sigma_i}_{K_i} \dto^{\beta^i_*} &\eps_i \rto \dto^{\alpha^i_*}
&\eps (H)\dto^= \\
\beta^i_*(x^{\sigma_i}_{K_i})&\eps_*\rto &\eps(H) 
}$$

\begin{lemma}
  Fix an integral representation $M$ of $W$ occurring as a submodule of 
  $\Lambda^0$.    Suppose  $K_i, K_*\in 
  \sub_{\Sigma}(H)$ correspond to  $S_i, S_*\in \sub_Z(T)$ and
  $x_{K_i}, x_{K_*}$ for corresponding splittings.  
We suppose $\Lambda^{\Sb_i}\cong M$
  (independent of  $i$) and  $K_i\subseteq K_*$. Convergence of the 
  subgroups $K_i$ is determined by the condition   $K_i\lra K_*$ if and only if 
\begin{itemize}
\item  $\Sb_i\lra \Sb_*$
\item  $\alpha^i_*(\eps_i)=\eps_*\in H^2(W; 
  \Tb)$ 
\item $\beta^i(x^{\sigma_i}_{K_i})=x^{\sigma_i}_{K_*} \in H^1(W; \Tb/\Sb_*)$
\end{itemize}
\end{lemma}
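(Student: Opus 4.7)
My plan is to reduce to the toral case and then unpack the parametrization of subgroups in terms of the triple (subtorus, extension class, conjugacy parameter). Using Lemma \ref{lem:subHsubHb} together with the fact that subgroups close enough to $H$ automatically contain $\Sigma$, I replace $H$ by $\Hb = H/\Sigma$ and assume $H_e = \Tb$. Each $K_i$ is then a lift of $\Sb_i \subseteq \Tb$ via a splitting $\sigma_i \colon W \to \Hb$ whose factor set lies in $\Sb_i$ and represents $\eps_i \in H^2(W; \Sb_i)$.

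For the forward direction, assume $K_i \lra K_*$. Continuity of $K \mapsto K \cap \Tb$, which follows from $\Tb$ being characteristic in $H_e$ (Lemma \ref{lem:SigmaTchar}), gives $\Sb_i \lra \Sb_*$; by passing to a tail and using Montgomery--Zippin we may assume $\Sb_i \subseteq \Sb_*$. Since $K_i \subseteq K_*$, a splitting $\sigma_i$ defining $K_i$ also serves as a splitting for $K_*$: its factor set, which lies in $\Sb_i \subseteq \Sb_*$, represents the extension class of $K_*$ after pushforward along the inclusion, yielding $\alpha^i_*(\eps_i) = \eps_*$. The same shared splitting identifies the conjugacy parameters: if $K_i = \kappa_i(x^{\sigma_i}_{K_i})$, then $K_*$ expressed via the same base splitting $\sigma_i$ is $\kappa_i(\beta^i_*(x^{\sigma_i}_{K_i}))$, so $\beta^i_*(x^{\sigma_i}_{K_i}) = x^{\sigma_i}_{K_*}$ as required.

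For the reverse direction, I must produce $H$-conjugates $K_i'$ of $K_i$ whose underlying subsets of $H$ converge in the Hausdorff metric to $K_*$. Using that $\Lambda^{\Sb_i} \cong M$ is constant up to isomorphism, and passing to a subsequence, I fix a base splitting $\sigma$ for $K_*$ taking values in a finite subgroup (as is possible since the extension is supported on a finite subgroup of $\Tb$). The hypothesis $\beta^i_*(x^{\sigma_i}_{K_i}) = x^{\sigma_i}_{K_*}$ lets me modify $\sigma_i$ by a coboundary in $B^1(W; \Tb/\Sb_i)$, that is, replace $K_i$ by an $H$-conjugate, so that the values $\sigma_i(w) \in \Tb$ converge pointwise to $\sigma(w)$ as $i \to \infty$. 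Combined with $\Sb_i \lra \Sb_*$, the subsets generated by $\Sb_i$ together with $\sigma_i(W)$, which exhaust $K_i$, then converge in Hausdorff distance to the subset generated by $\Sb_*$ and $\sigma(W)$, which is $K_*$.

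The main obstacle is this very last step: cohomological data determines subgroups only up to $H$-conjugacy, whereas Hausdorff convergence is a pointwise set-theoretic condition. The technical crux is therefore to arrange the conjugating elements coherently in $i$ so that representative cocycles converge on the nose in $\Tb$, not merely cohomologically. This uses the constancy of the isomorphism class $M$ and the finiteness of the support of the extension to bound the sizes of the coboundary adjustments, together with compactness of $\Tb$ to extract convergent subsequences.
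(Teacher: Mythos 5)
Your forward direction matches the argument that the paper assembles before the lemma, and is fine. The problem is in the reverse direction, where you acknowledge a genuine gap ("arrange the conjugating elements coherently in $i$ so that representative cocycles converge on the nose") and only gesture at how to close it; the gap is real and your sketch does not close it. Bounding the exponent of the support and extracting subsequences by compactness does not by itself produce a coherent choice of conjugating elements: you would need to show that the cocycle representatives can be chosen to converge pointwise in $\Tb$, not merely that their cohomology classes are related, and you give no mechanism for that.

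The difficulty is self-inflicted, because you do not exploit the standing hypothesis $K_i \subseteq K_*$. Since $K_i$ is an actual subgroup of $K_*$, any splitting $\sigma_i \colon W \to K_i$ is automatically a splitting of $K_* \to W$ (it takes values in $K_i \subseteq K_*$). Choosing $\sigma_i$ to be a defining splitting for $K_i$, so $K_i = H(\Sb_i,\sigma_i)$, the third condition forces the class of $K_*$ relative to this same base splitting to vanish, i.e.\ $K_* = H(\Sb_*,\sigma_i)$. Thus $K_i$ and $K_*$ are the unions of the \emph{same} $W$-indexed family of cosets $\sigma_i(w)\Sb_i$ and $\sigma_i(w)\Sb_*$ respectively, and Hausdorff convergence $K_i \lra K_*$ reduces coset-by-coset to the hypothesis $\Sb_i \lra \Sb_*$. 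There is never any need to conjugate $K_i$ or to make distinct splittings converge pointwise, and the obstacle you flag simply does not arise. This is the route the paper takes, and it is both shorter and gap-free; you should recast the reverse direction along these lines rather than trying to repair the cocycle-convergence argument.
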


  \begin{proof}
We have already argued that the condition is necessary. Suppose then 
given $K_i, K_*$ satisfying the three conditions. By the first 
condition, discarding initial terms if necessary we may suppose 
$\Sb_i\subseteq \Sb_*$ and $\Sb_*$ is the closure of $\bigcup_i \Sb_i$. 
By construction the third condition shows that $K_i=H(\Sb_i, \sigma_i), K_*=H(\Sb_*,
\sigma_i)$.
    \end{proof}
    \subsection{Splitting}
We now recognize that in the toral lattice, the trivial repesentation behaves quite
differently to the others, and we split the flattened block into a
product of two pieces accordingly. 
The block of subgroups $K$ is flattened into considering the 
 subgroups  $\Sb=K/\Sigma \cap \Tb$, and then we use the fact that this such subgroups
 almost split    as a product of a central part and a moving part.

    \begin{prop}
      There is a commutative diagram
      $$\xymatrix{
        &\sub(\Tb)_G=\sub_{L_f}(\Tb_1\times \Tb_f)_G\drto \\
        \cV^G_H\urto^{\nu}\drto_{\lambda}&&
\sub_{L_f}(\Tb_1)_H\times \sub_{L_f}(\Tb_f)_H\\
        &\sub_{L_f}(\Tb_1)_G \times \cN^G_H\urto_{\{id, \nu \} }
        }$$
        where $\lambda(K)=(\tau (K), \omega(K))$ with $\tau (K)=K/\Sigma\cap
        \Tb_1$. We write $\sigma$ for the upward composibte so that 
        $$\sigma (\Khat, \Rb)=(\Khat\cap \Tb_f+\Rb).$$
      \end{prop}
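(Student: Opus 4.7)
My plan is to verify commutativity pointwise and then invoke continuity. The map $\nu$ is continuous because $\omega$ is (Lemma~\ref{lem:omegacts}) and $\Tb$ is characteristic in $H_e$ (Lemma~\ref{lem:SigmaTchar}); the map $\sigma$ is assembled from elementary continuous operations. First I reduce to the toral case using Lemma~\ref{lem:subHsubHb}: once $\cN^G_H$ is chosen sufficiently small, each $K\in\cV^G_H$ contains $\Sigma$, so we may work in the toral group $\Hb=H/\Sigma$ with identity component $\Tb$.

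Fix $K\in\cV^G_H$ and write $Y=\bar{K}\cap\Tb$ and $X=\omega(K)\cap\Tb$. Unpacking the two composites, commutativity reduces to the equality
$$Y \;=\; (Y\cap\Tb_1)\;+\;(X\cap\Tb_f)$$
of subgroups of $\Tb=(\Tb_1\times\Tb_f)/L_f$. The $\Tb_1$-projection is the definition of $\tau(K)=Y\cap\Tb_1$; the substance is to show that $Y\cap\Tb_f=X\cap\Tb_f$ modulo the $L_f$-torsion absorbed by the $\sub_{L_f}$ convention.

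The heart of the argument, and the main obstacle, is this last equality. The inclusion $Y\cap\Tb_f\subseteq X\cap\Tb_f$ is immediate from $K\subseteq\omega(K)$. For the reverse, I use that $\omega(K)/K$ is the maximal torus of $W_G(K)$, and in particular abelian. For $t\in X$ and any $w\in K$ lifting an element of $H_d$ (such lifts exist since $K$ is full when close enough to $H$), the commutator $[w,t]$ lies in $K\cap\Tb=Y$: it lies in $\Tb$ because $\Tb$ is normal in $H$, and it lies in $K$ because $w\in K\subseteq\omega(K)$ and $\omega(K)/K$ is abelian. Written additively in the torus, $(w-1)t\in Y$, so the quotient $X/Y$ carries the trivial $H_d$-action. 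Rationally its Lie algebra therefore sits inside $V^{H_d}=V_1$, forcing $X_e\subseteq Y_e\cdot\Tb_1$. Intersecting with $\Tb_f$ and using $\Tb_1\cap\Tb_f\subseteq L_f$, the discrepancy $(X\cap\Tb_f)/(Y\cap\Tb_f)$ is finite $H_d$-fixed torsion controlled by $L_f$, which yields the required equality in $\sub_{L_f}$.

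Assembling the $\Tb_1$- and $\Tb_f$-components recovers $\nu(K)=\sigma\circ\lambda(K)$ pointwise, and continuity of all three maps upgrades this to the commutativity of the diagram.
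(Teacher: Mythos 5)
Your overall strategy — verify commutativity pointwise and upgrade by continuity — is reasonable, and the reduction to the toral case via Lemma~\ref{lem:subHsubHb} matches the paper's standing assumption. But the core of your argument is genuinely different from the paper's. The paper's proof is a three-line computation: it identifies both routes and then simply asserts that $\nu(\omega(K)) = (K/\Sigma\cap\Tb) + \Tb_1$ and that this sum meets $\Tb_f$ in $K/\Sigma\cap\Tb_f$; the burden is carried silently by the structural analysis in Section~\ref{sec:subgroups} (cotoral pairs preserve the $\Tb_f$-component). You instead prove the key cancellation by a commutator argument: for $w\in K$ and $t\in\omega(K)\cap\Tb$, use $[\omega(K),\omega(K)]\subseteq K$ (cotorality) and normality of $\Tb$ to get $[w,t]\in Y$, so $X/Y$ carries the trivial $H_d$-action, and then rationalize. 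This is a nice explanation of \emph{why} the paper's asserted formula holds, and it is not the route the paper takes.

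That said, two things need attention. First, your argument for continuity of $\nu$ is off: $\nu(K)=K/\Sigma\cap\Tb$ does not involve $\omega$; its continuity is of the same nature as the first continuity lemma of Section~\ref{sec:partition} (intersection with a fixed normal subgroup), together with $\Tb$ being characteristic. Second and more substantively, your final step is a genuine gap. The rationalization shows $L(X)\subseteq L(Y)+V_1$, which gives at best that $(X\cap\Tb_f)/(Y\cap\Tb_f)$ is \emph{finite with trivial $H_d$-action}. You then assert this finite group is ``controlled by $L_f$'' and hence absorbed by the $\sub_{L_f}$ convention, but nothing in the argument ties it to $L_f$: for instance, if $\Tb_f=S^1$ with $H_d$ acting by inversion, a cyclic discrepancy $C_2$ has trivial $H_d$-action yet has no a priori relation to $L_f$. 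To close the gap you need the stronger structural input that for $K$ cotoral in $\Khat=\omega(K)$ the subquotient $(\Khat\cap\Tb)/(K\cap\Tb)$ embeds in the torus $\Khat/K$, and then to argue that the finite piece actually vanishes (or is controlled by $L_f$); this is exactly the content of the analogue, for the cotoral pair $(K,\omega(K))$, of the lemma in Section~\ref{sec:subgroups} that treats $(K,H)$. So the commutator mechanism is a worthwhile alternative, but as written it proves equality only up to unidentified finite error.
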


      \begin{proof}
By the top route, a subgroup $K$ is sent to $(K\cap \Tb_1, K/\Sigma \cap \Tb_f)$. By the bottom route, it is sent first to $(K/\Sigma \cap
\Tb_1, \omega (K))$, so it remains only to point out that $\nu (\omega (K))=(K/\Sigma\cap
\Tb+\Tb_1)$, which meets $\Tb_f $ in $K/\Sigma \cap \Tb_f$ as claimed. 
              \end{proof}

We note that we have a good understanding of the fibres  of these
maps. The fibres of $\nu$  are of the form $H^1(W;\Tb/\Sb)$, and
similarly for the restricted $\nu$ on the bottom  route. the second
map on the upper route is is addition of subgroups under
$$\Tb_1\times \Tb_f\lra \Tb, $$
so that subgroups in the fibre are distinguished by how much of the
finite group $L_f$ they contain.

\section{Arithmetic Grassmannians}
\label{sec:GrassQ}
 \subsection{Reduction to rationally isotypical modules}
 Continuing, up to finite error, we show that we may decompose $W\!-\!\sub(\Lambda^0)$
 as a product of spaces $W\!-\!\sub(\Lambda_i)$ where $\Lambda_i\tensor
 \Q=S_i^{\oplus m_i}$ is a sum of simple modules.

 \begin{lemma}
   Suppose $\Lambda_1, \Lambda_2\subseteq \Lambda$. We may consider 
  the map 
  $$\delta: \Gr_*(\Lambda)\lra \Gr_*(\Lambda_1)\times \Gr_*(\Lambda_2)$$
taking $\delta(\Gamma)=(\Lambda_1 \cap \Gamma, \Lambda_2 \cap 
\Gamma)$. If $\Lambda\tensor \Q$ is spanned by $\Lambda_1, \Lambda_2$
then the map $\delta$ has finite fibres since 
$$\Lambda_1\cap \Gamma +\Lambda_2\cap \Gamma \subseteq \Gamma $$
and this is a rational isomorphism. 

If $\Lambda_1\cap \Lambda_2=0$ then the map $\delta$ is surjective, 
since $(\Gamma_1, \Gamma_2)$ is the image of  $\Gamma_1+\Gamma_2$.
\end{lemma}

We now suppose
$$\Lambda^0\tensor \Q=S_1^{\oplus m_1}\oplus S_2^{\oplus m_2}\oplus
\cdots \oplus S_t^{\oplus m_t}$$
and observe that if we take $\Lambda_i=\Lambda^0\cap S_i^{\oplus m_i}$
we have $\Lambda_i\cap \Lambda_j=0$ for $i\neq j$ and
$$\Lambda_1\oplus \cdots \oplus \Lambda_t\subseteq \Lambda^0$$
is a rational isomorphism.

\begin{cor}
 There is a surjective map 
$$W\!-\!\sub(\Lambda^0)\lra W\!-\!\sub(\Lambda_1)\times
W\!-\!\sub(\Lambda_2)\times \cdots \times W\!-\!\sub(\Lambda_t)$$
with finite fibres.
\end{cor}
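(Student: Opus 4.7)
The plan is to deduce the corollary from the preceding lemma by peeling off one isotypical block at a time.  Define, for each $1\leq k\leq t$, the $W$-invariant subgroup
$$\Lambda_{\geq k}:=\Lambda^0\cap \bigl(S_k^{\oplus m_k}\oplus S_{k+1}^{\oplus m_{k+1}}\oplus\cdots\oplus S_t^{\oplus m_t}\bigr),$$
so that $\Lambda_{\geq 1}=\Lambda^0$, $\Lambda_{\geq t}=\Lambda_t$, and $\Lambda_k\subseteq\Lambda_{\geq k}$.

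For each $k<t$, I would check that $\Lambda_k$ and $\Lambda_{\geq k+1}$, viewed as $W$-invariant subgroups of $\Lambda_{\geq k}$, satisfy both hypotheses of the preceding lemma.  First, $\Lambda_k\cap\Lambda_{\geq k+1}=0$ by uniqueness of the rational isotypical decomposition (the summands $S_k^{\oplus m_k}$ and $S_{k+1}^{\oplus m_{k+1}}\oplus\cdots$ intersect trivially).  Second, $\Lambda_k\otimes\Q$ and $\Lambda_{\geq k+1}\otimes\Q$ together span $\Lambda_{\geq k}\otimes \Q$ since tensoring with $\Q$ recovers the isotypical decomposition.  The lemma therefore produces a surjection with finite fibres
$$\delta_k:W\!-\!\sub(\Lambda_{\geq k})\twoheadrightarrow W\!-\!\sub(\Lambda_k)\times W\!-\!\sub(\Lambda_{\geq k+1}).$$
(The lemma is stated for ordinary subgroup Grassmannians but its proof restricts verbatim to the $W$-equivariant setting: sums and intersections of $W$-invariant subgroups are $W$-invariant.)

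Next, I would compose $\delta_1,\delta_2,\ldots,\delta_{t-1}$ in the evident manner to obtain a single map
$$W\!-\!\sub(\Lambda^0)\longrightarrow W\!-\!\sub(\Lambda_1)\times W\!-\!\sub(\Lambda_2)\times\cdots\times W\!-\!\sub(\Lambda_t).$$
Surjectivity and finiteness of fibres are preserved under composition.  To confirm this composite is the asserted intersection map $\Gamma\mapsto (\Gamma\cap\Lambda_1,\ldots,\Gamma\cap\Lambda_t)$, observe that $\Lambda_j\subseteq\Lambda_{\geq k}$ for every $j\geq k$, so at each stage of the iteration the $\Lambda_j$-component of the output is just $\Gamma\cap\Lambda_j$ and is unchanged by subsequent stages.

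The only subtle point — and the one where I would expect the main obstacle — is justifying the "rational isomorphism" clause of the preceding lemma for $W$-invariant subgroups $\Gamma$ that are not rationally of full rank.  This is handled by semisimplicity of rational $W$-representations: for any $\Gamma\in W\!-\!\sub(\Lambda^0)$ one has $\Gamma\otimes\Q=\bigoplus_i(\Gamma\cap\Lambda_i)\otimes\Q$, since the rational isotypical projections are $\Q W$-linear and a suitable integer multiple of each sends $\Gamma$ into $\Gamma\cap\Lambda_i$.  This identity supplies the equality of rational spans needed to apply the lemma at every inductive step and, together with the boundedness of $[\Lambda^0:\Lambda_1\oplus\cdots\oplus\Lambda_t]$, gives the finite fibre claim.
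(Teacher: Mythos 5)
Your proof is correct and follows the paper's intended approach: the paper simply states the corollary after observing that the $\Lambda_i$ pairwise intersect in $0$ and their sum is a rational isomorphism onto $\Lambda^0$, leaving the iterated application of the lemma implicit. Your "subtle point" is a genuine one and you have handled it correctly: the lemma's claim that $\Lambda_1\cap\Gamma + \Lambda_2\cap\Gamma \subseteq \Gamma$ is a rational isomorphism is false for a general subgroup $\Gamma$ (e.g.\ $\Gamma = \Z(1,1)$ inside $\Z^2 = \Z(1,0)\oplus\Z(0,1)$ has $\Lambda_i\cap\Gamma=0$), and it is precisely the $W$-invariance of $\Gamma$ together with semisimplicity of $\Q W$ and the fact that the $\Lambda_i\otimes\Q$ are the isotypical summands that guarantees $\Gamma\otimes\Q = \bigoplus_i(\Gamma\cap\Lambda_i)\otimes\Q$, making the finite-fibre argument go through.
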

 
This has finally reduced us to looking at the spaces
$W\!-\!\sub(\Lambda)$.

\subsection{Integral Grassmannians}

Starting without a group action, we have the Stone space
  $\sub(\T)=\Gr_*(\Lambda)$, where
  $\T\cong T^r$ is a product of $r$ circles and $\Lambda=\T^*\cong
  \Z^r$ is a free abelian group of rank $r$. The topology on this can
  be described by a metric, and there is a filtration by rank of
  subgroups
  $$0=\Gr_{\geq r}(\Lambda)\subset \Gr_{\geq r-1}(\Lambda)\subset
  \cdots \subset \Gr_{\geq 1}(\Lambda)\subset \Gr_{\geq 0}(\Lambda)
  =\Gr_*(\Lambda)$$ 
Each term is open and dense in the whole space, and the pure strata
are discrete.

It is natural to divide the submodules according to their
rationalization, so we consider the map $\Gr_*(\Lambda)\lra
\Gr_*(\Lambda\tensor \Q)$.
The map is surjective since one can look at a basis of a submodule in 
terms of an integral basis of $\Lambda$ and clear denominators. 
The fibre over an
  $i$-dimensional representation $V$ consists of sublattices of $V\cap
  \Lambda \cong \Z^i$ and is in bijection with $GL_i(\Q)[\Z]/GL_i(\Z)$
  of integral matrices with non-zero determinant modulo those with
  determinant $\pm 1$.

This is generally fairly complicated. To start with, the fibre has a
free action of the monoid $\PP$ of positive integers $\PP$, since 
if $\Lambda'\subseteq \Lambda$ the multiple $m\Lambda'$ is another 
$W$-submodule. If $\Lambda $ is of rank 1 the fibre will be a single
orbit, but usually it will be much more complicated. There will be a
component for each $\Z W$-isomorphism class of lattices $\Lambda'$, 
and then a copy of $\Mono (\Lambda')/\Aut (\Lambda')$ for each one. 
For example if $W$ is of order 2 and $\Lambda =\Z W$ the component 
consisting of lattices with the same rationalization as $\Lambda$
consists of all $\Lambda_1(m,n)$ (with $m,n\geq 1$) and all
$\Lambda_2(m,n)$ (with $m,n\geq 1$ and $m+n, m-n$ even). Thus there 
are two components and each of them has two parameters.

  \subsection{Integral equivariant Grassmannians}
  If $\Lambda$ is an integral representation of 
  the finite group $W$, we may consider the subspace 
  $\Gr_*^W(\Lambda)\subseteq \Gr_*(\Lambda)$ of $W$-invariant 
  subspaces. This is compatible with rationalization
  in that there is a square
  $$\xymatrix{
    \Gr_*^W(\Lambda)\rto\dto & \Gr_*(\Lambda)\dto \\
    \Gr_*^W(\Lambda\tensor \Q)\rto & \Gr_*(\Lambda\tensor \Q) \\
  }$$
  
\subsection{Rationality}
  If $\Lambda\tensor K\cong S^{\oplus m}$ is a sum of copies of a single
  simple module $S$ of dimension $d$, then all subspaces
  are sums of copies of $S$ and $\Gr_{i}^W(S^{\oplus m})=\emptyset$ unless
  $i=dj$. If in addition $\End_K(S)=K$ then $\Gr_{dj}(S^{\oplus m})=\Gr_j(K)$.
  In general we easily see that the action of $GL^W(S^{\oplus m})$ is
  transitive on submodules isomorphic to $S^j$ and hence 
  $$\Gr_{dj}^W(S^{\oplus m})=GL^W(S^{\oplus m})/[GL^W(S^{\oplus j}\oplus 0)\times GL^W(0\oplus
  S^{\oplus (m-j)})]. $$
 By Schur's Lemma, 
$$GL^W(S^{\oplus m})\cong GL_m(\cE_K(S))$$
where $\cE_K(S)=\End_{KW}(S)$ is a division algebra. 
  If $K$ is an algebraically closed field, $\End_K(S)=K$.

 \subsection{The structure of a block}

 Each block is up to finite error a product of copies of standard
 pieces.

 \begin{prop}
   If $H_e=\Sigma\times_Z T$ with $\Lambda_0=H_1(T)$, and  if
   $$\Lambda_0\tensor \Q =S_1^{\oplus m_1}\oplus S_2^{\oplus
     m_2}\oplus \cdots \oplus S_s^{\oplus m_s}, $$
   and if $\Lambda_i =S_i^{\oplus m_i}\cap \Lambda_0$ then there is a map
 $$\nu':\cV^G_H\lra \sub(T_1)_G\times \Gr_*^W (\Lambda_2)\times
 \Gr_*^W (\Lambda_3)\times \cdots \times  \Gr_*^W (\Lambda_s). $$
 which is almost surjective and which has finite fibres of bounded
 size. There are maps
 $$\Gr_n^W(\Lambda )\lra \Gr_n(\Lambda\tensor \Q)$$
 The fibre breaks into pieces according to the $\Z W$-forms of a
 rational subspace. Finally, the space $\Gr_*^W(\Lambda)$ is a
 compactification of the space of full rank $W$-submodules of $\Lambda$.
\end{prop}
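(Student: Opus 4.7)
The statement packages three items: the construction and properties of $\nu'$, the rationalization map $\Gr_n^W(\Lambda)\to \Gr_n(\Lambda\tensor\Q)$, and the compactification claim. Each is assembled from constructions already in place.

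\textbf{Step 1 (the map $\nu'$).} Start from $\lambda : \cV^G_H \to \sub(T_1)_G \times \cN^G_H$, shown in the previous section to be almost surjective with finite fibres of bounded size. Post-compose its second factor with the continuous assignment $\Khat \mapsto (\Khat/\Sigma)\cap \Tb_f \in W\!-\!\sub(\Tb_f)_G$, and then with the decomposition map
$$W\!-\!\sub(\Lambda_f) \longrightarrow \prod_{i\geq 2}\Gr_*^W(\Lambda_i)$$
furnished by the preceding corollary, which is surjective with finite fibres bounded by the index of $\bigoplus_{i\geq 2}\Lambda_i$ in $\Lambda_f$. The first factor $\sub(T_1)_G$ of $\lambda$ is already in the target. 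Almost surjectivity and boundedness of the finite fibres are inherited from the constituent maps.

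\textbf{Step 2 (rationalization).} The map $\rho: \Gr_n^W(\Lambda)\to \Gr_n(\Lambda \tensor \Q)$ defined by $M\mapsto M\tensor \Q$ is continuous, and surjective because any $\Q W$-subspace $V$ is the rationalization of the full-rank $\Z W$-sublattice $V\cap \Lambda$. Its fibre over $V$ consists of the full-rank $\Z W$-sublattices of $V\cap \Lambda$, which decomposes as a disjoint union of pieces indexed by the $\Z W$-isomorphism class of $M$, each piece parametrised by $\Mono(M)/\Aut(M)$ as in the earlier discussion.

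\textbf{Step 3 (compactification).} The space $\Gr_*^W(\Lambda)\subseteq \Gr_*(\Lambda) = \sub(T)$ is compact, being the closed $W$-fixed subspace of a Stone space. Under Pontryagin duality, full-rank $W$-sublattices correspond to finite $W$-invariant subgroups of $T$; each is isolated in $\sub(T)$ (distinct finite subgroups sit at positive Hausdorff distance), so they form an open subspace. For density, given any $W$-sublattice $M$ of rank $< \rank(\Lambda)$, pick a $W$-stable complement $N_\Q$ of $M\tensor\Q$ in $\Lambda\tensor\Q$ (which exists by semisimplicity of $\Q[W]$), set $N = N_\Q\cap \Lambda$, and form $M_n := M + nN$. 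Dually this gives a sequence of finite $W$-invariant subgroups of $T$ converging to the subgroup dual to $M$, so $M_n\to M$ in $\Gr_*^W(\Lambda)$. The only real subtlety is verifying this convergence in the Hausdorff metric under the duality; the remaining pieces are formal assembly from earlier lemmas.
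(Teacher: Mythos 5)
The paper states this proposition without a separate proof; it functions as a summary of the development in Sections \ref{sec:flattening}--\ref{sec:GrassQ}, so there is no ``paper's proof'' to compare against line by line. Your assembly is the natural one and is essentially sound: Step~1 correctly composes $\lambda$ with the isotypical decomposition from the Corollary, Step~2 matches the paper's discussion of $\Gr_*(\Lambda)\to\Gr_*(\Lambda\tensor\Q)$, and Step~3 supplies the one genuinely new verification (the compactification claim), which the paper leaves implicit.

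Two small points worth tightening. First, in Step~2 the fibre over $V$ with a fixed $\Z W$-isomorphism type $[M]$ is parametrised by injective $\Z W$-maps $M\hookrightarrow V\cap\Lambda$ modulo $\Aut(M)$, i.e.\ $\Mono(M,V\cap\Lambda)/\Aut(M)$, not $\Mono(M)/\Aut(M)$; the latter only parametrises the submodules abstractly isomorphic to the whole of $V\cap\Lambda$. (The paper's own phrasing ``$\Mono(\Lambda')/\Aut(\Lambda')$'' has the same slippage, so this is more a clarification than a correction.) Second, your density argument in Step~3 does work, but the convergence deserves the one extra line you flag: writing $M_n^\perp=\{t\in M^\perp : nt\in N^\perp\}$, one sees that $M_n^\perp$ is precisely the preimage in $M^\perp$ of the $n$-torsion of the torus $M^\perp/(M^\perp\cap N^\perp)$, and the $n$-torsion of a torus converges to the whole torus in the Hausdorff metric as $n\to\infty$; combined with $M_n^\perp\subseteq M^\perp$ this gives $M_n^\perp\to M^\perp$. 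Your observation that finite subgroups are isolated in $\sub(T)$ is correct (for $\epsilon$ less than a third of the minimal gap in $F$, any subgroup contained in $B_\epsilon(F)$ must project isomorphically to $F$ and hence equal $F$), and with the density argument completed this does establish that $\Gr_*^W(\Lambda)$ is a compactification of the discrete set of full-rank $W$-sublattices.
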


\section{A normalizer variation lemma}
\label{sec:normalizers}
 We  are most interested in controlling the variation of the Weyl groups $W_G(K)=N_G(K)/K$. We will do this by
controlling the variation of the normalizer $N_G(K)$.
We've already see the variation is
not functorial in $K$ and we give some examples to show the variation is not simple. 
However we will eventually prove the critical fact that taking normalizers is 
uppper semi-continuous. More precisely, every subgroup $H$ has a 
neighbourhood so that for subgroups $H_i$,  $d(H_i, H)<\eps$ implies 
$N_G(H_i)\subseteq N_G(H)$.

\begin{example} The normalizer $N_G(K)$ is not continuous as a
  function of $K$. This example shows this is because of the
  behaviour of components. For example we may choose $G=T^2\sdr C_3$,
  with $C_3$ acting non-trivially.  We may choose a succession of
  finite subgroups $F$ of the 2-torus $T^2$. If  $F$ is
  invariant under $C_3$ the normalizer is $G$, and otherwise it is
  $T^2$. If we choose a sequence $F_i$ tending to $T^2$ with the even
  terms invariant under $C_3$ and the odd ones not then the sequence
  $N_G(F_i)$ is not convergent (for example $F_{2i}=C_{2i}\times
  C_{2i}$, $F_{2i+1}=C_{2i}\times C_{2i+1}$). 
\end{example}

The bad behaviour is all about the component group, so may wish to
consider  the identity component $N_G^e(K)$ instead. However $N_G^e$ 
 is not continuous either. 

\begin{example}
We take $G=O(2)$, and consider the sequence of dihedral subgroups
$D_{2n}$. Of course $N_{G}(D_{2n})=D_{4n}$ so $N_G^e(D_{2n})$ is
trivial for all $n$, but $N_G^e(O(2))=SO(2)$.  
\end{example}

\begin{thm}
\label{thm:normalizersusc}
Taking normalizers is upper semicontinuous in the sense that if
$H_i\lra H$ (for subgroups $H_i$ of $H$),  then there is an $n$ so that $N_G(H_i)\subseteq N_G(H)$
for $i\geq n$. 
\end{thm}

\begin{remark}
If we do not know that $H_i$ is a subgroup of $H$, we need to permit
conjugation. The Montgomery-Zippin Theorem shows that $H_i$ is
subconjugate to $H$, and two different subconjugations will differ by
an element of $N_G(H)$. Elements of $N_G^f(H)$ will give homotopic
identifications. Since $W_G^d(H)$ is finite, we may suppose by making
$n$ big enough that we are restricted to $N_G^f(H)$, and hence we will
get well defined cohomology maps. 
\end{remark}

\begin{proof}
Let us suppose $H^e=\Sigma\times_Z T$, with maximal torus $T\Sigma
\times_Z T$. Suppose also that
$H_i^e=\Sigma_i\times_{Z_i}T_i$. Conjugating by elements of $H$ we may
assume that $T\Sigma_i \times_{Z_i} T_i\subseteq T\Sigma \times_Z T$. 
First we note that $\Sigma_i\subseteq \Sigma$, and $\Sigma_i\lra
\Sigma$. Discarding initial terms we may suppose $\Sigma_i=\Sigma $
and $Z_i=Z$. Now 
$$N_G(H_i)\subseteq N_G(H_i^e)\subseteq N_G(\Sigma), $$
and so we may replace $G$ by  $N_G(\Sigma)$ and suppose $\Sigma$ is 
normal in $G$. Now we may factor out $\Sigma$, and it suffices to show 
$N_G(H_i/\Sigma)\subseteq N_G(H/\Sigma)$. To simplify notation we 
suppose $\Sigma=1$.

Now we have $H_i^e=T_i$ and $H^e=T$ and we  argue
$$N_G(H_i)\subseteq N_G(H_i^e) \subseteq N_G(H^e), $$
by Lemma \ref{lem:torusnormalizers}. Hence we may replace $G$ by
$N_G(H^e)$ and suppose $T$ is  normal in $G$. Thus 
$$H_i/H_i\cap T \lra H/T , $$
and since $H/T$ is finite, we may discard initial terms and suppose
$$H_i/H_i\cap T = H/T . $$
Note that $H$ is a union of cosets $kT$ where $k\in H$, and since
$H/T=H_i/H_i\cap T$ we may suppose $k\in H_i$. 
Now we see that if $g\in N_G(H_i)$ then 
$$g(kT)g^{-1}=gkg^{-1}gTg^{-1}=k'T $$
where $k'\in H_i$. Thus $g$ takes all cosets in $H$ to other cosets in
$H$ and $g\in N_G(H)$ as required. 
\end{proof}

\begin{lemma}
\label{lem:torusnormalizers}
If $T$ is a torus in $G$ and $T_i\lra T$ with $T_i\subseteq T$  then 
there is an $n$ so that $N_G(T_i)\subseteq N_G(H)$ for $i\geq n$.  
\end{lemma}

\begin{proof}
It suffices to show that if $i$ is large enough and $g$ normalizes $T_i$ then it normalizes $T$. Since $T$ is connected, it is generated by a neighbourhood of its identity, and we can work in an open ball of radius $2\delta>0$ so that the exponential map is a homeomorphism $B_{LG}\cong B_G$ between the balls. Thus it suffices to show that $ad(g)$ takes the tangent space $LT$ to $LT$. If $T$ is of rank $s$ and we choose linearly independent elements $v_1, \ldots , v_s$ in $ LT$, since $ad(g)$ is a linear isomorphism, their images will again be linearly independent in $LG$. It therefore suffices to show the $v_s$ can be chosen so that $ad (g) (v_1), \ldots ad(g) (v_s)$ lie inside $LT$.  

For this we will use the exponential map, and the fact that it is easy to understand within a torus. Since $ad(g)$ is the derivative of $c_g$ at the identity, and since every automorphism of a torus is linear, we have the following commutative diagram of abelian groups 
$$\xymatrix{
&T^g&&LT^g\ar[ll]^{exp}\\
T\ar[ur]^{c_g}&&LT\ar[ur]^{ad(g)}\ar[ll]^{exp}&\\
&T_i^g\ar[uu]&&LT_i^g\ar[ll]^{exp}\ar[uu]\\
T_i\ar[ur]^{c_g}\ar[uu]&&LT_i\ar[ur]^{ad(g)}\ar[ll]^{exp}\ar[uu]&\\
}$$
We note that the exponential maps are local homeomorphisms within the balls $B_{V}=B_{LG}\cap V$ for vector subspaces $V$ of $LG$. If $g$ normalizes $T_i$ then $T_i^g=T_i$

Choose coordinates and a metric on $LT$ so that it has orthonormal basis $e_1, \ldots , e_s$ (top right front). By averaging we may assume $ad(g)$ is an isometry.  
We may find $\epsilon>0$ so that if $v_s\in LT$ is within $\epsilon $ of $\delta \cdot e_i\in B_{LT}$, then the $v_s$ are linearly independent (any $\epsilon <\delta/2$ will do), and choose $N$ so that $d(T_i,T)<\epsilon$ for $i\geq N$. Thus there is a point of $T_i$ within $\epsilon$ of every point in $B_T$.

We may therefore choose points $x_s\in B_T\cap T_i$ so that they (or more properly their counterparts $v_s$ in $B_{LT}\cap LT$) form a basis of $LT$. Since the exponential map is surjective, we may find $w_s\in LT_i$ so that $x_s=exp(w_s)$ (of course $w_s$ will usually not be inside $B_{LT}$, but since $w_s$ and $v_s$ map to $x_s$ under the exponential map, they differ by the lattice $\ker (exp: LT\lra T)$). 
Now if $g$ normalizes $T_i$ we have $T_i^g=T_i$ and so $ad(g)(w_s)\in LT_i$, and hence 
$$exp (ad(g)v_s)=exp (ad(g)(w_s))\in exp(LT_i)\subseteq exp(LT)$$ as required.  A priori we only know $ad(g)(v_s)\in B_{LG}$, but  $exp$ is a homeomorphism on $B_{LG}$ and the images lie in $LT$, so it follows that $ad(g)(v_s)\in B_{LG}\cap LT$ as required. 
\end{proof}

It is often useful to know that the identity component of the
normalizer is actually continuous. 
\begin{cor}
If $H_i\lra H$ then $N_G(H_i)_e\lra N_G(H)_e$. 
\end{cor}

\begin{proof}
We have proved $N_G(H_i)\subseteq N_G(H)$, and this implies a
containment of identity components. It remains to show that every
element of $N_G(H)^e$ normalizes $H_i$. 
  
As in the original proof, we may assume that $H_i$ and $H$ share the
semisimple factor of the identity component and work inside its
normalizer. Factoring out, we may assume that $H_i$ and $H$ are
toral. It suffices to show that for any toral group $H$, the
automorphism group  $\Aut (H)$ is discrete. 
  \end{proof}


\begin{thebibliography}{10}

\bibitem{prismatic}
S.~Balchin, T.~Barthel, and J.~P.~C. Greenlees. 
\newblock Prismatic decompositions and rational {$G$}-spectra. 
\newblock {\em Preprint, 59pp, arXiv 2311.18808}, 2023. 

\bibitem{gtoralq}
David Barnes, J.P.C. Greenlees, and Magdalena K{\c{e}}dziorek. 
\newblock An algebraic model for rational toral {$G$}-spectra. 
\newblock {\em Algebr. Geom. Topol.}, 19(7):3541--3599, 2019. 

\bibitem{AVmodel}
J.~P.~C. Greenlees. 
\newblock An abelian model for rational {$G$}-spectra for a compact {L}ie group 
  {$G$}. 
\newblock {\em In preparation, 27pp}. 

\bibitem{AGnoeth}
J.~P.~C. Greenlees. 
\newblock An algebraic model for rational {$G$}-spectra for finite central 
  extensions of a torus. 
\newblock {\em In preparation, 18pp}. 

\bibitem{ratmack}
J.~P.~C. Greenlees. 
\newblock Rational {M}ackey functors for compact {L}ie groups. {I}. 
\newblock {\em Proc. London Math. Soc. (3)}, 76(3):549--578, 1998. 

\bibitem{AGconj}
J.~P.~C. Greenlees. 
\newblock Triangulated categories of rational equivariant cohomology theories. 
\newblock {\em Oberwolfach Reports, pages 480–488, 2006. (cit. on p. 2)}, 
  2006. 

\bibitem{AGtoral}
J.~P.~C. Greenlees. 
\newblock Rational equivariant cohomology theories with toral support. 
\newblock {\em Algebr. Geom. Topol.}, 16(4):1953--2019, 2016. 

\bibitem{spcgq}
J.~P.~C. Greenlees. 
\newblock The {B}almer spectrum of rational equivariant cohomology theories. 
\newblock {\em J. Pure Appl. Algebra}, 223(7):2845--2871, 2019. 

\bibitem{su3q}
J.~P.~C. Greenlees. 
\newblock An algebraic model for rational {$SU(3)$}-spectra. 
\newblock {\em Preprint, 14pp, arXiv: 2502.06017}, 2025. 

\bibitem{u2q}
J.~P.~C. Greenlees. 
\newblock An algebraic model for rational {$U(2)$}-spectra. 
\newblock {\em Preprint, 18pp, arXiv: 2502.00959}, 2025. 

\bibitem{gq1}
J.~P.~C. Greenlees. 
\newblock Algebraic models for one-dimensional categories of rational 
  {$G$}-spectra. 
\newblock {\em Preprint, 28pp, arXiv:2501.11200}, 2025. 

\bibitem{gqwf}
J.~P.~C. Greenlees. 
\newblock Rational {$G$}-spectra for components with finite {W}eyl groups. 
\newblock {\em In preparation, 4pp}, 2025. 

\bibitem{t2wqmixed}
J.~P.~C. Greenlees. 
\newblock Rational {$G$}-spectra for rank 2 toral groups of mixed type. 
\newblock {\em In preparation, 24pp, arXiv:2501.15584}, 2025. 

\bibitem{t2wq}
J.~P.~C. Greenlees. 
\newblock Rational {$G$}-spectra for small toral groups. 
\newblock {\em Preprint, 31pp}, 2025. 

\bibitem{t2wqalg}
J.~P.~C. Greenlees. 
\newblock Spaces of subgroups of toral groups. 
\newblock {\em Preprint, 38pp, arXiv:2501.06914}, 2025. 

\bibitem{ShipleyHZ}
Brooke Shipley. 
\newblock {HZ}-algebra spectra are differential graded algebras. 
\newblock {\em American Journal of Mathematics}, 129(2):351--379, 2007. 

\end{thebibliography}
\end{document}